\documentclass[11pt]{article}
\usepackage{amsfonts,amsmath,latexsym,verbatim,amscd,amsthm}

\title{Constant curvature foliations \\ in asymptotically hyperbolic spaces}

\textheight=235mm \textwidth=148mm \topmargin=-15mm
\oddsidemargin=-0mm \evensidemargin=-35mm

\author{Rafe Mazzeo \thanks{Email: mazzeo@math.stanford.edu.
Supported by the NSF under Grant DMS-0505709}\\ Stanford University \and Frank
Pacard \thanks{Email: pacard@univ-paris12.fr, Membre de l'Institut Universitaire de France}\\
Universit\'e Paris 12}

\date{}

\newtheorem{theorem}{Theorem}[section]
\newtheorem{proposition}{Proposition}[section]
\newtheorem{corollary}{Corollary}[section]
\newtheorem{lemma}{Lemma}[section]
\newtheorem{definition}{Definition}[section]

\newcommand{\RR}{\mathbb{R}}

\newcommand{\HH}{\mathbb{H}}
\newcommand{\e}{\varepsilon}
\newcommand{\del}{\partial}

\newcommand{\tr}{\mathrm{tr}\,}

\newcommand{\calC}{{\mathcal C}}
\newcommand{\calF}{{\mathcal F}}

\newcommand{\calL}{{\mathcal L}}

\newcommand{\calN}{{\mathcal N}}
\newcommand{\calO}{{\mathcal O}}
\newcommand{\calR}{{\mathcal R}}

\newcommand{\calU}{{\mathcal U}}
\newcommand{\calV}{{\mathcal V}}
\newcommand{\frakc}{{\mathfrak c}}

\newcommand{\olg}{\overline{g}}
\newcommand{\Ric}{\mathrm{Ric}}
\newcommand{\olN}{\overline{N}}
\newcommand{\II}{I\!I}
\newcommand{\wh}{\widehat}
\newcommand{\calE}{\mathcal{E}}
\newcommand{\Hess}{\mbox{Hess}\,}
\newcommand{\olM}{\overline{M}}

\begin{document}

\maketitle

\begin{abstract}
Let $(M,g)$ be an asymptotically hyperbolic manifold with a smooth conformal compactification.
We establish a general correspondence between semilinear elliptic equations of scalar curvature type
on $\del M$ and Weingarten foliations in some neighbourhood of infinity in $M$. We focus mostly
on foliations where each leaf has constant mean curvature, though our results apply equally well to
foliations where the leaves have constant $\sigma_k$-curvature. In particular, we prove the existence
of a unique foliation near infinity in any quasi-Fuchsian $3$-manifold by surfaces with constant
Gauss curvature. There is a subtle interplay between the precise terms in the expansion for 
$g$ and various properties of the foliation. 
Unlike other recent works in this area, by Rigger \cite{Ri} and Neves-Tian \cite{NT1},
\cite{NT2}, we work in the context of conformally compact spaces, which are 
more general than perturbations of the AdS-Schwarzschild space, but we do assume
a nondegeneracy condition.  
\end{abstract}

\section{Introduction}
A foliation is called geometric if each leaf inherits some particular geometric structure from the  
ambient metric. We are interested here in foliations where the leaves are of codimension one and  
satisfy some Weingarten condition, i.e.\ the principal curvatures $\kappa_1, \ldots, \kappa_n$ 
satisfy $f(\kappa_1, \ldots, \kappa_n) = c$  where $f$ is symmetric in its entries, and the constant
$c$ can vary from  leaf to leaf. The most commonly  studied of these are foliations by minimal hypersurfaces,
$\sum \kappa_j = 0$,  or where the leaves have  constant mean curvature (CMC), $\sum \kappa_j = c$, and 
this latter class will  be our main focus. However,  more general cases are also of interest, e.g.\ 
when $f = \sigma_k$,  the $k^{\mathrm{th}}$ symmetric function of the principle curvatures (in particular, 
when $k=n$,  so $f$ is the Gauss-Kronecker curvature), and consideration of these requires little extra 
effort to incorporate into our main results. 

The main questions we consider here concern the existence and uniqueness of such foliations in some  
neighborhood of infinity in general asymptotically hyperbolic manifolds. For simplicity, we  
concentrate on CMC foliations in most of this paper, and relegate discussion of the minor changes 
needed to handle more general functions $f$ in a final section. There are several motivations for 
studying geometric foliations. On the most basic level, one might  hope to prove that such foliations exist and 
are fairly stable or rigid, and hence are interesting objects more or less uniquely associated to a  
Riemannian manifold. Foliations in an ambient Lorentzian  space with (spacelike) CMC leaves are used 
frequently in relativity, as one part of a `good coordinate gauge' \cite{An}, \cite{Ger}. In the 
Riemannian setting, an influential paper by Huisken and Yau \cite{HY} proved the existence of a 
foliation near infinity in an asymptotically Euclidean manifold  using a geometric heat flow. In 
certain situations this is unique, and they use it to define a  `center of mass' for an isolated 
gravitational system. Essentially the same result was also attained by Ye \cite{Y2} using elliptic 
singular perturbation methods. The sharpest uniqueness statement for foliations of this type was 
obtained by Qing and Tian \cite{QT}. There are analogous results in the asymptotically hyperbolic 
setting. Existence of CMC foliations on high order perturbations of the  AdS Schwarzschild space was
proved by Rigger \cite{Ri}, again using mean curvature flow, and quite recently Neves and Tian
\cite{NT1}, \cite{NT2} have established uniqueness and extended the existence theory in this setting. In a 
somewhat different direction, some time ago, Labourie \cite{La} used pseudoholomorphic curves in 
the cotangent  bundle to construct constant Gauss curvature foliations near infinity in convex cocompact
hyperbolic  three-manifolds. Our results are closely related to the results of Rigger, Neves-Tian and Labourie. 

All of these are foliations in a neighbourhood of infinity, but one may also consider foliations in a 
compact set of the manifold which collapse in the limit to some lower dimensional set. Ye \cite{Y1} 
proved, under certain conditions, existence and uniqueness of CMC spheres collapsing to a point. 
It turns out that this limiting point is necessarily a critical point of the scalar curvature function. 
A recent extension of this \cite{PX} treats the `very degenerate' case where the ambient manifold has 
constant scalar curvature. The papers \cite{MP1}, \cite{MMP} construct `partial' CMC foliations which 
collapse to higher dimensional minimal submanifolds. This minimality is again necessary. The survey 
\cite{Pac-surv} gives a good overview of all of this. 

As already noted, our goal here is to revisit this problem in the asymptotically hyperbolic case. We shall 
work in a broader geometric setting than either Rigger or Neves-Tian, namely that of conformally compact  
manifolds $(M,g)$. Thus $M$ is a compact $(n+1)$-dimensional manifold with boundary, with $n \geq 2$,  
and $g = \rho^{-2}\, \olg$ is a complete metric  on its interior; here $\olg$ is a metric which extends 
up to $\del M$ and $\rho$ is a smooth defining  function for $\del M$. In this paper, unless otherwise stated, 
we require that $\bar g$ has a $\calC^{3,\alpha}$ extension up to $\del M$. Assuming $|d\log \rho|_g^2 \to 1$
as $\rho \to 0$, then $g$ is asymptotically hyperbolic in the sense that the sectional curvatures all 
tend to $-1$ at infinity.  Naturally associated to $g$ is its conformal infinity, 
\begin{equation}
\frakc(g) : = \big[\left. \olg \right|_{T\del M} \big],
\label{eq:confinf}
\end{equation} 
which is a conformal class on $\del M$. There is a simple correspondence, due to Graham and Lee \cite{GL}, 
between metrics on $\del M$ which represent this conformal class, `special' boundary defining functions, 
and hypersurfaces near infinity in $M$, which are essential and outer convex, which are the level sets of 
these defining functions. More specifically, given $h_0 \in \frakc(g)$, there is a boundary
defining function $x$ so that 
\begin{equation}
g = \frac{dx^2 + h(x)}{x^2}, \qquad \mbox{where}\ h(x) =  h_0 + h_1 \, x +  h_2 \, x^2 + \ldots .
\label{eq:norform}
\end{equation}

The level sets $\{x = \mbox{const.}\}$ have mean curvature which is almost constant, and in our main 
existence results we show how to perturb these level sets so that they are exactly CMC (or have
constant $\sigma_k$ curvatures, etc.). The key to our 
method, however, is to relate this problem about the extrinsic geometry of these level sets to conformal 
geometry problems in the class $\frakc(g)$. 

In dimension $n \geq 3$, the first, and most important case, is when $h_1 = 0$ and $h_2$ is equal to the 
negative of the Schouten tensor 
of $h_0$: 
\[
h_2 = - P_{h_0}:  = - \frac{1}{n-2}\left(\Ric(h_0) - \frac{R_{h_0}}{2(n-1)} \, h_0\right). 
\] 
As explained in \S 2, this corresponds to the initial part of the expansion of a Poincar\'e-Einstein metric. 
It will emerge why these conditions are well-defined. 
\begin{theorem}
Let $(M^{n+1}, g)$ be conformally compact, and suppose that for some smooth boundary defining function $x$, the
conformal compactification $\olg = x^2 g$ is $\calC^{3,\alpha}$ up to $\partial M$. Suppose also that 
$h_1 = 0$ and $h_2 = -P_{h_0}$. 
\begin{enumerate}
\item[(i)] If the conformal class $\frakc(g)$ has negative Yamabe invariant, then there exists a 
unique CMC foliation near infinity.
\item[(ii)] If, on the other hand, $\frakc(g)$ has positive Yamabe invariant, then to each constant 
scalar  curvature metric $h_0 \in \frakc(g)$ which is nondegenerate for the linearized Yamabe equation,  
we can associate a CMC foliation. Different constant scalar curvature metrics correspond to geometrically distinct foliations.
\end{enumerate}
\label{th:pa1}
\end{theorem}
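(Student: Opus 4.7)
The plan is to reformulate the CMC condition, which is nonlinear in the extrinsic geometry of hypersurfaces near infinity, as a semilinear elliptic equation of Yamabe type on $\del M$. The key construction is the Graham--Lee correspondence between conformal representatives $h \in \frakc(g)$ and special boundary defining functions $\hat x$: each $h = e^{2u} h_0$ determines a unique $\hat x$ producing a normal form expansion \eqref{eq:norform} for $g$ based on $h$. Candidate hypersurfaces are taken as level sets $\{\hat x = \epsilon\}$, so the pair $(u,\epsilon)$ parametrizes the unknown and the foliation structure is already built into the problem.

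First I would expand the mean curvature of $\{x = \epsilon\}$ in powers of $\epsilon$ using \eqref{eq:norform}. The second fundamental form in the normal form is essentially $-\tfrac{1}{2}\partial_x h(x)$ up to factors of $x$, and a direct computation gives
\[
H(\Sigma_\epsilon) \;=\; n \;+\; \epsilon\, \tr_{h_0}(h_1) \;+\; \epsilon^2\, Q(h_0, h_1, h_2) \;+\; O(\epsilon^3)
\]
for an explicit universal $Q$. The hypothesis $h_1 = 0$ kills the $O(\epsilon)$ term, while the identity $\tr_{h_0}(P_{h_0}) = R_{h_0}/(2(n-1))$ combined with $h_2 = -P_{h_0}$ forces the $O(\epsilon^2)$ term to be a fixed multiple of $R_{h_0}$. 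Repeating the expansion in $\hat x$-coordinates with boundary metric $\hat h_0 = e^{2u} h_0$ and using the standard conformal transformation law for scalar curvature, the equation $H(\{\hat x = \epsilon\}) = H_\epsilon$ reduces, to leading order in $\epsilon$, to the Yamabe equation
\[
-\tfrac{4(n-1)}{n-2}\, \Delta_{h_0} \phi \;+\; R_{h_0}\, \phi \;=\; \mu(\epsilon)\, \phi^{\frac{n+2}{n-2}},
\]
where $\phi = e^{(n-2)u/2}$ and $\mu(\epsilon)$ is determined by the prescribed mean curvature $H_\epsilon$.

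The second step solves this equation together with its higher-order $\epsilon$-corrections. In case (i), the negative Yamabe invariant of $\frakc(g)$ gives a unique (up to scale) constant scalar curvature representative, and the linearization of the conformal Laplacian at the corresponding $\phi_0$ is coercive; the implicit function theorem in a weighted $\calC^{2,\alpha}$ space yields a smooth family $\phi(\epsilon)$ for $\epsilon$ small, producing the desired CMC foliation, and uniqueness follows because any CMC foliation near infinity must, by the same asymptotic analysis, be captured by a solution of this equation, of which there is essentially only one. In case (ii), each constant scalar curvature representative $h_0$ with $R_{h_0} > 0$ trivially solves the leading equation; the assumed nondegeneracy of the linearized Yamabe operator at $h_0$ is exactly what is needed to run the implicit function theorem, and distinct such $h_0$ produce leaves whose induced boundary metrics converge to distinct points in the moduli of CSC metrics, hence geometrically distinct foliations.

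The main obstacle I expect is organizing the higher-order expansion so that the implicit function theorem applies uniformly in $\epsilon$. The mean curvature of $\{\hat x = \epsilon\}$ depends on $u$ through all the coefficients $h_k$ of the normal form, and the $\calC^{3,\alpha}$-only regularity of $\olg$ limits how far this expansion can legitimately be pushed; the error terms must be treated as controlled perturbations, which likely requires weighted or polyhomogeneous function spaces in $\epsilon$ with coefficients depending on the boundary variable. Verifying that the resulting CMC hypersurfaces actually sweep out a full neighborhood of infinity monotonically — hence form a foliation rather than a disjoint one-parameter family — reduces to checking that $\partial_\epsilon \hat x$ has definite sign along each leaf, which should follow from the smoothness of $\phi(\epsilon)$ in $\epsilon$ down to $\epsilon = 0$ and a perturbation from the background $g = \HH^{n+1}$ situation.
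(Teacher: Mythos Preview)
Your overall strategy matches the paper's: parametrize candidate hypersurfaces by level sets of special bdf's via the Graham--Lee correspondence, expand the mean curvature (the paper's convention gives $H = n - \kappa_1 \epsilon - \kappa_2 \epsilon^2 + O(\epsilon^3)$, with $\kappa_2 = -\tfrac{1}{2(n-1)}R_{h_0}$ in the weakly Poincar\'e--Einstein case, so your sign on the $\epsilon$ term is off but harmlessly so), observe that the conformal transformation law for $\kappa_2$ is exactly the Yamabe equation after the substitution $u_0 = e^{(n-2)\phi_0/2}$, and then run the implicit function theorem off a nondegenerate constant scalar curvature representative. You will not need weighted or polyhomogeneous spaces: because $\kappa_1 = 0$, the operator $\mathcal N(\phi_0,\epsilon) = \epsilon^{-2}(n - H(\phi_0,\epsilon))$ extends $\mathcal C^1$ in $\epsilon$ down to $\epsilon = 0$, so the ordinary implicit function theorem in $\mathcal C^{2,\alpha}(\partial M)$ suffices, with linearization $\Delta_{h_0} - 2\kappa_2$ at $(0,0)$.

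The one place where the paper proceeds differently is the uniqueness in case (i). Your plan is to argue that any CMC foliation must, by asymptotic analysis, arise from a solution of the reduced equation, and there is only one such solution; but this requires knowing a priori that an arbitrary CMC leaf is the level set of some special bdf, i.e.\ that its outward normal exponential map is a diffeomorphism onto the exterior, which is not obvious for a leaf you have not constructed. The paper instead observes that when $\kappa_2 > 0$ (equivalently, negative Yamabe invariant) the constructed foliation has mean curvature strictly decreasing in $\epsilon$, and then applies a direct maximum-principle comparison (Proposition~\ref{pr:unic}): any competing CMC leaf is squeezed between two leaves of the monotone foliation and forced to coincide with one of them. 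This is more robust and sidesteps the need to characterize arbitrary CMC foliations asymptotically.
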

Recall that a conformal class $\frakc(g)$ is said to have negative or positive Yamabe invariant if, for any
representative $h_0 \in \frakc(g)$, the least eigenvalue $\lambda_1$ of the conformal Laplacian
\[
- \left( \Delta_{h_{0}} - \frac{n-2}{4(n-1)} R_{h_{0}} \right)
\]
is negative, respectively positive. Here $R_{h_{0}}$ is the scalar curvature of the metric $h_{0}$ on $\partial M$. 
When $\frakc(g)$ has negative Yamabe invariant, there is a unique constant scalar curvature metric in this conformal
class (up to scale), while if this Yamabe is positive, there may very well be a large number of constant scalar curvature 
representatives, and hence a large number of geometrically distinct CMC foliations near infinity. 

Theorem \ref{th:pa1} follows from a more general result. In terms of the expansion (\ref{eq:norform}), define
the two functions 
\begin{equation}
\kappa_1  : =  \frac{1}{2}\, \tr^{h_0} h_1 \in \calC^{2,\alpha}(\del M) \qquad \mbox{and} \qquad \kappa_2  : = 
tr^{h_0} h_2 - \frac{1}{2} \, \| h_1\|^2_{h_0} \in \calC^{1,\alpha}(\del M).
\label{eq:k1k2}
\end{equation}
We show later that these are independent of the choice of representative $h_0 \in \frakc(g)$. Assuming the role of 
the conformal Laplacian is the operator
\begin{equation}
\mathbb L_{h_0} : = - \left( \Delta_{h_{0}} + \frac{n-2}{2} \kappa_{2} \right),
\label{eq:genconflap}
\end{equation}
which we call the {\em generalized conformal Laplacian}. Note that if $h_1 = 0$ and $h_2 = -P_{h_0}$, then
${\mathbb L}_{h_0}$ {\it is} the conformal Laplacian.

\begin{definition}
When $n \geq 3$, we say that the conformally compact metric $g$ has positive or negative generalized boundary 
Yamabe invariant if
\[
\inf\ \{\int_{\del M} \phi_0 \, \mathbb L_{h_0}\phi_0\, dV_{h_0}:  \|\phi_0\|_{L^{\frac{2n}{n-2}}=1}\}
\]
is positive or negative (or equivalently,  if the least eigenvalue of $\mathbb L_{h_0}$ is positive or negative). 
When $n =2$, we say that the conformally compact metric $g$ has  positive or negative generalized boundary Yamabe invariant if 
\[
- \int_{\partial M} \kappa_{2} \, dvol_{h_{0}} \, ,
\]
is positive or negative, respectively.
\end{definition}
The somewhat confusing sign conventions here are analogous to the ones in the Yamabe problem. We will see that the
signs of these boundary invariants are  independent of the representative $h_0 \in \frakc(g)$. 

Now consider the Yamabe-type equation 
\begin{equation}
e^{-2 \phi_0} \, \left( \kappa_2 + \Delta_{h_0} \phi_0 + \frac{n-2}{2} \, |\nabla^{h_0} \phi_0|^2_{h_0}  \right) - 
\tilde \kappa_2 = 0.
\label{eq:yyttee}
\end{equation}
where $\tilde \kappa_2$ is constant. Theorem~\ref{th:pa1} follows from 
\begin{theorem}
Let $(M^{n+1},g)$ be conformally compact, $n \geq 2$  and suppose that $\kappa_1 \equiv 0$.  
\begin{enumerate}
\item[(i)] If $g$ has negative generalized boundary Yamabe invariant, then there exists a unique CMC 
foliation near infinity.
\item[(ii)]  If $g$ has positive generalized boundary Yamabe invariant, then to each solution $\phi_0\in 
\calC^{2,\alpha}(\del M)$ of (\ref{eq:yyttee}) for which the linearization of (\ref{eq:yyttee}) at $\phi_0$ is 
invertible, we can associate a CMC foliation.  Different solutions correspond to geometrically distinct foliations.
\end{enumerate}
\label{th:theorem1.1}
\end{theorem}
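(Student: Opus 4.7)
The plan is to realize the leaves as exponential graphs over the coordinate slices of the special defining function. For $w \in \calC^{2,\alpha}(\del M)$ and small $\varepsilon > 0$, set $\Sigma_\varepsilon(w) := \{x = \varepsilon\, e^{w(y)}\}$. The first step is to expand the mean curvature of $\Sigma_\varepsilon(w)$ with respect to $g$ using the normal form (\ref{eq:norform}). The leading order is $n$, the coefficient of $\varepsilon$ involves $\kappa_1$ and hence vanishes by hypothesis, and the coefficient of $\varepsilon^2$ is exactly the nonlinear combination appearing in (\ref{eq:yyttee}):
\[
H(\Sigma_\varepsilon(w)) = n + \varepsilon^2 \, e^{-2w}\bigl(\kappa_2 + \Delta_{h_0} w + \tfrac{n-2}{2}|\nabla^{h_0} w|_{h_0}^2\bigr) + O(\varepsilon^3).
\]
The CMC condition $H \equiv n + \varepsilon^2\, \tilde\kappa_2$ then reads $\calF(\varepsilon, w, \tilde\kappa_2) := \varepsilon^{-2}(H(\Sigma_\varepsilon(w)) - n) - \tilde\kappa_2 = 0$, and $\calF$ extends smoothly to $\varepsilon = 0$ as the left-hand side of (\ref{eq:yyttee}).

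Next I would apply an implicit function theorem to $\calF : [0,\varepsilon_0) \times \calC^{2,\alpha}(\del M) \times \RR \to \calC^{0,\alpha}(\del M)$. Direct computation shows that the linearization of $\calF|_{\varepsilon=0}$ at a solution $\phi_0$ of (\ref{eq:yyttee}), in the directions $(u,t)$, is (after using that $e^{-2\phi_0}$ times the bracket equals $\tilde\kappa_2$)
\[
-2\tilde\kappa_2\, u + e^{-2\phi_0}\bigl(\Delta_{h_0} u + (n-2)\langle \nabla^{h_0}\phi_0, \nabla^{h_0} u\rangle_{h_0}\bigr) - t.
\]
Under the substitution $u_0 := e^{(n-2)\phi_0/2}$, which turns (\ref{eq:yyttee}) into $\mathbb L_{h_0} u_0 = -\tfrac{n-2}{2}\tilde\kappa_2\, u_0^{(n+2)/(n-2)}$, the operator above is conjugate to the Jacobi operator of that Yamabe-type equation, and its invertibility in $(u,t)$ is precisely the stated nondegeneracy of $\phi_0$. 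The implicit function theorem then yields, for each such $\phi_0$, a unique smooth family $(w_\varepsilon, \tilde\kappa_2(\varepsilon))$ with $w_0 = \phi_0$. The family $\{\Sigma_\varepsilon(w_\varepsilon)\}$ is a genuine foliation of a collar of $\del M$ because $\partial_\varepsilon(\varepsilon e^{w_\varepsilon(y)}) = e^{\phi_0(y)} + O(\varepsilon) > 0$, and distinct $\phi_0$ clearly give geometrically distinct foliations (they can be read off as the leading trace on $\del M$). This proves part (ii).

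Part (i) is then reduced to part (ii). When $\mathbb L_{h_0}$ has negative least eigenvalue, the standard Yamabe-type variational/monotonicity argument applied to $\mathbb L_{h_0} u_0 = -\tfrac{n-2}{2}\tilde\kappa_2\, u_0^{(n+2)/(n-2)}$ (with $\tilde\kappa_2 < 0$ after normalization) yields a unique positive solution $u_0$, and the associated Jacobi operator is automatically positive definite, so $\phi_0$ is nondegenerate. Part (ii) then supplies a CMC foliation. Global uniqueness of the foliation requires showing that \emph{any} CMC hypersurface sufficiently deep in the end is of the form $\Sigma_\varepsilon(w)$ with $w$ close to $\phi_0$ in $\calC^{2,\alpha}$; this follows from expressing such a hypersurface as a small graph over a coordinate slice (using the normal exponential map together with a gradient bound coming from the CMC equation itself) and then invoking the uniqueness clause of the implicit function theorem.

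The main technical obstacle is the mean-curvature expansion to order $\varepsilon^2$, uniformly in $w$ over a $\calC^{2,\alpha}$-ball, together with the verification that $\calF$ is $C^1$ on $[0,\varepsilon_0) \times \calC^{2,\alpha}(\del M) \times \RR$ with continuous derivative at $\varepsilon = 0$. The regularity assumption $\olg \in \calC^{3,\alpha}$ gives only $\kappa_2 \in \calC^{1,\alpha}$ and $\calC^{0,\alpha}$ control on the remainder, so one must carefully track the H\"older losses at each stage of the expansion and the linearization. A secondary point that must be checked is that $\kappa_1$, $\kappa_2$, and the sign of the generalized boundary Yamabe invariant are invariants of $(M,g)$, independent of the representative $h_0 \in \frakc(g)$; this is done by analyzing how these quantities transform under the change of special defining function induced by conformally rescaling $h_0$, and it ensures the whole statement is genuinely conformal.
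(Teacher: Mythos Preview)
Your overall strategy coincides with the paper's: parametrize hypersurfaces near $\partial M$ as exponential graphs, expand the mean curvature to order $\e^2$, identify the coefficient with the left side of (\ref{eq:yyttee}), and apply the implicit function theorem near a nondegenerate solution. (The paper first uses (\ref{eq:yyttee}) to change to a representative $h_0$ with $\kappa_2$ constant, then runs the IFT at $\phi_0=0$; this is equivalent to your version.) Two points need repair. Your IFT map $(u,t)\mapsto Lu - t$, with $L$ the linearization of (\ref{eq:yyttee}) in $\phi_0$, is never an isomorphism $\calC^{2,\alpha}\times\RR\to\calC^{0,\alpha}$: when $L$ is invertible it is surjective with one-dimensional kernel spanned by $(L^{-1}1,1)$, which reflects the reparametrization $w\mapsto w+c$, $\e\mapsto \e\,e^{-c}$ leaving $\Sigma_\e(w)$ unchanged. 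You should fix $\tilde\kappa_2$ and solve only for $w$, as the paper does. Also, the sign in part (i) is reversed: integrating (\ref{eq:YTE}) against the first eigenfunction of $\mathbb L_{h_0}$ shows that a negative generalized Yamabe invariant forces $\tilde\kappa_2>0$, not $\tilde\kappa_2<0$.

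The real gap is your uniqueness argument for (i). You would need an a priori estimate showing that every compact CMC hypersurface close to $\partial M$ is a $\calC^{2,\alpha}$-small graph near $\phi_0$; the ``gradient bound coming from the CMC equation'' you invoke does not obviously deliver this, and the uniqueness clause of the IFT is in any case only local. The paper's mechanism is different and cleaner: since $\tilde\kappa_2>0$, the constructed foliation is \emph{monotone decreasing} (its mean curvature $n-\tilde\kappa_2\e^2$ strictly decreases in $\e$), and a direct maximum-principle comparison shows that a monotone decreasing CMC foliation is unique among all CMC foliations with compact leaves tending to $\partial M$. Namely, any leaf $\Sigma'$ of a competing foliation is sandwiched between leaves $\Sigma_{\tau_1}$ (outside) and $\Sigma_{\tau_2}$ (inside) of the given one with tangential contact; mean-curvature comparison at the contact points gives $H_{\tau_1}\leq H(\Sigma')\leq H_{\tau_2}$, while monotonicity gives $H_{\tau_1}\geq H_{\tau_2}$, forcing equality and $\Sigma'=\Sigma_{\tau_1}=\Sigma_{\tau_2}$. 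This monotonicity/maximum-principle step is the idea you are missing.
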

When $g$ has negative generalized boundary Yamabe invariant, we shall prove that there exists a solution of 
(\ref{eq:yyttee}) with $\tilde \kappa_{2}$ a positive constant, and this solution is unique once 
this constant is fixed. When $g$ has positive generalized boundary Yamabe invariant, it may not be possible 
to find a solution of (\ref{eq:yyttee}), and uniqueness might not hold. 

\medskip

The condition $\kappa_1 \equiv 0$ does not depend on the choice of the representative $h_0 \in \frakc(g)$. Note that
if in addition $\tilde{\kappa}_2 =0$, then invertibility of the linearization of (\ref{eq:yyttee}) at $\phi_0$ 
necessarily fails and our method does not apply in this case, even though CMC foliations may well exist in such 
circumstances (as, e.g. in \cite{Ri} , \cite{NT1} and \cite{NT2}).

When $\kappa_1 \neq 0$ the situation is somewhat more complicated. 
\begin{theorem}
Let $(M^{n+1},g)$ be conformally compact, $n \geq 2$. 
\begin{enumerate}
\item[(i)] Assume that $\kappa_1 > 0$ everywhere and also that there exists a conformal compactification $\bar g$ 
which is $\mathcal C^{3}$ up to $\del M$, then there exists a unique CMC foliation near infinity. 
\item[(ii)] If $\kappa_1 < 0$ everywhere and some conformal compactification $\bar g$ has a $\calC^\infty$ extension 
up to $\partial M$, then there exists a CMC foliation with gaps, with leaves tending to infinity.
\end{enumerate}
\end{theorem}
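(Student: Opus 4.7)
The plan is to extend the perturbative strategy of Theorem~\ref{th:theorem1.1} to the case $\kappa_1 \not\equiv 0$. As before, I parameterize candidate hypersurfaces as graphs $\Sigma_u = \{x = u(y)\}$ over $\del M$ in the Graham--Lee normal form $g = x^{-2}(dx^2 + h(x))$. Rescaling by $u = \e \, \psi(y)$, with $\psi > 0$ of order one and $\e$ small, the mean curvature expansion from the preceding sections yields
\[
H(\e \, \psi) \; = \; n - \e \, \psi(y) \, \kappa_1(y) + \e^2 \, \calN_2[\psi] + O(\e^3),
\]
where $\calN_2$ is a second-order differential operator in $\psi$; derivatives of $\psi$ enter only at order $\e^2$ or higher, because $\nabla u = \e \, \nabla \psi$ enters the mean curvature quadratically through the tilt factor. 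Imposing $H = n - \e c$ for a prescribed constant $c$, the leading-order equation becomes the purely \emph{algebraic} relation $\psi(y) \, \kappa_1(y) = c$.

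\emph{Part (i).} Assume $\kappa_1 > 0$ everywhere and choose $c > 0$. Then the zeroth-order profile $\psi_0 := c/\kappa_1$ is positive and lies in $\calC^{2,\alpha}(\del M)$ under the $\calC^3$ regularity hypothesis on $\bar g$. Writing $\psi = \psi_0 + \e v$, expanding, and using $\psi_0 \kappa_1 = c$, the leading-order terms cancel, and the equation for $v$ takes the schematic form
\[
\kappa_1(y) \, v \; = \; \calN_2[\psi_0] + \e \, R(v, \e; y),
\]
with $R$ smooth in its arguments. Since $\kappa_1$ is bounded away from zero, the linearization is pointwise multiplication by $\kappa_1$, which is uniformly invertible on $\calC^{2,\alpha}(\del M)$; a standard contraction-mapping argument then produces a unique $v = v_\e$ for each small $\e > 0$. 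A direct calculation gives $\del_\e[\e \, \psi_\e(y)] = \psi_0(y) + O(\e) > 0$ uniformly in $y$, so the resulting one-parameter family $\{\Sigma_\e\}$ is nested and sweeps out a neighborhood of infinity, forming the claimed CMC foliation. Uniqueness follows from the uniqueness in the contraction mapping, exactly as in the proof of Theorem~\ref{th:theorem1.1}.

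\emph{Part (ii) and main obstacle.} When $\kappa_1 < 0$, the algebraic profile $\psi_0 = c/\kappa_1$ is positive precisely when $c < 0$, and the contraction argument still produces a CMC leaf $\Sigma_\e$ for each small $\e > 0$. The delicate issue is that, as $\e$ varies, the higher-order corrections to $\e \, \psi_\e(y)$, whose sign is controlled by $\calN_2[\psi_0]$ and thus by $\kappa_2$, $h_1$, etc., can defeat the positive leading term $\psi_0(y)$ at certain scales, so that nearby leaves may intersect and a strict foliation need not exist. The strategy is therefore to push the formal expansion of $\psi_\e$ to arbitrary order in $\e$ (this is where the $\calC^\infty$ assumption on $\bar g$ is essential) and to select a discrete sequence of scales $\e_k \to 0$ along which the constructed leaves are embedded, pairwise disjoint, and accumulate at $\del M$; the parameter values not selected produce the advertised gaps. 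The main obstacle is carrying out this gap-selection rigorously: one must extract an infinite admissible sequence $\{\e_k\}$ with the disjointness property and rule out degeneracies (self-intersection, failure to be a graph) at each scale, via iterative quantitative refinement of the ansatz that only the smoothness of the compactification makes possible.
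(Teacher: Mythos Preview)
Your approach has a genuine gap in Part~(i) and misidentifies the mechanism entirely in Part~(ii).

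\textbf{Part (i): loss of derivatives.} You claim the linearization of the mean-curvature equation at $\psi_0$ is multiplication by $\kappa_1$, and then invoke a contraction in $\calC^{2,\alpha}$. But the remainder $R(v,\e;y)$ contains the second derivatives of $v$ coming from $\calN_2$, so the fixed-point map sends $\calC^{2,\alpha}$ only to $\calC^{0,\alpha}$ and the contraction does not close. Equivalently, the problem is a \emph{singular perturbation}: the full linearization is $\kappa_1 + \e\,(\text{second-order operator})$, which degenerates to zeroth order at $\e=0$, so the implicit function theorem at $\e=0$ in a fixed function space is unavailable. The paper handles this by first normalizing $\kappa_1\equiv 1$ via the conformal transformation law $\tilde\kappa_1 = e^{-\phi_0}\kappa_1$, then working in $\e$-rescaled H\"older spaces $\calC^{k,\alpha}_\e$ (each $\partial_y$ weighted by $\sqrt{\e}$), in which the linearization $L_\e = \e\Delta_{h(\e)} - 1 + \calO(\e)$ is uniformly bounded and, by a blow-up argument limiting to $\Delta_{\RR^n}-1$, uniformly invertible. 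A separate bootstrap then recovers unweighted regularity. Your uniqueness claim is also too weak: the paper's uniqueness is among \emph{all} CMC foliations, obtained from monotonicity and the maximum-principle comparison of Proposition~\ref{pr:unic}, not merely local uniqueness from a contraction.

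\textbf{Part (ii): wrong source of the gaps.} You assert the contraction still produces a leaf for every small $\e$ and that the gaps arise because nearby leaves might intersect. This is not the phenomenon. When $\kappa_1<0$ (normalize to $\kappa_1\equiv -1$), the linearization becomes $L_\e = \e\Delta_{h(\e)} + 1 + \calO(\e)$, which \emph{fails to be invertible} whenever $1/\e$ is close to an eigenvalue of $-\Delta_{h_0}$. These resonance values form a countable set accumulating at $0$ with Weyl counting $N(\e)\sim C\e^{-n/2}$. The paper first builds formal approximate solutions $\phi_{0,\e}^{(q)}$ to any order $\e^q$ (this is where $\calC^\infty$ regularity is used, and where the derivative loss you would also face here is absorbed), then shows that on a set $J\subset(0,\e_0)$ of density~$1$ at $0$ one has the polynomial resolvent bound $\|L_\e^{-1}\|\le C\e^{-N}$, and closes the fixed-point argument for $\e\in J$. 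The gaps are precisely the excluded parameter values near the resonances, not regions swept by intersecting leaves.
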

As before, the condition that $\kappa_1$ does not change sign is independent of the choice of the representative
$h_0 \in \frakc(g)$. If $\kappa_1$ changes sign or vanishes somewhere, our methods do not apply  and the existence 
of CMC foliations is doubtful.  The precise meaning of foliations with gaps will be clear in the course of
the proof of this theorem.

The existence of these foliations is established by a perturbation argument which is particularly straightforward 
when $\kappa_1 = 0$. The other cases are more singular. The  uniqueness statements follow from a certain
monotonicity of the mean curvature function defined on the foliations.  

Under various conditions, we also prove the existence of foliations with leaves having constant $\sigma_k$ curvatures.
Rather than state these results completely here, we note only one special case:
\begin{theorem}
Let $M$ be a three-dimensional quasi-Fuchsian hyperbolic manifold (i.e.\ a geometrically finite 
deformation of a warped product $(\Sigma \times \RR, dt^2 + \cosh^2 t\, h)$ where $(\Sigma,h)$ is a compact 
hyperbolic surface). Then the ends of $M$ admit unique foliations by compact surfaces with 
constant Gauss curvature. 
\end{theorem}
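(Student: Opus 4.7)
The plan is to derive this theorem from the $\sigma_{k}$-curvature analogue of Theorem~\ref{th:theorem1.1} applied in the special case $n=2$, $k=2$, where $\sigma_{2}=\kappa_{1}\kappa_{2}$ is the extrinsic Gauss--Kronecker curvature of a surface in $M$. The connection to intrinsic Gauss curvature comes from the Gauss equation in an ambient space of constant sectional curvature $-1$: for any immersed surface,
\[
K_{\mathrm{int}} = -1 + \kappa_{1}\kappa_{2} = -1 + \sigma_{2},
\]
so constant intrinsic Gauss curvature of the leaf is equivalent to constant $\sigma_{2}$ of the immersion.

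I would first check that each of the two ends of a quasi-Fuchsian manifold fits into the conformally compact framework: $M$ is complete hyperbolic, hence Einstein with $\Ric(g)=-2g$, and each end is geometrically finite, so it admits a smooth (indeed real-analytic) conformal compactification whose conformal infinity is a closed Riemann surface of genus $\geq 2$ carrying the standard quasi-Fuchsian projective structure. Because $g$ is Einstein, the formal expansion in Graham--Lee normal form forces $h_{1}\equiv 0$ and $\tr^{h_{0}} h_{2}=-R_{h_{0}}/2$, as worked out in \S 2 (and as one verifies directly in the Fuchsian model $dt^{2}+\cosh^{2}t\,h$). Consequently $\kappa_{1}\equiv 0$, and selecting the uniformizing representative $h_{0}$ of constant Gauss curvature (Gauss--Bonnet forces it to be negative) yields $\kappa_{2}=-R_{h_{0}}/2>0$. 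By the $n=2$ definition of generalized boundary Yamabe invariant,
\[
-\int_{\del M}\kappa_{2}\,dvol_{h_{0}}<0,
\]
so each end has negative generalized boundary Yamabe invariant. With these verifications in hand, the $\sigma_{2}$-analogue of Theorem~\ref{th:theorem1.1}(i) applied on each end produces a unique foliation by constant $\sigma_{2}$ surfaces near infinity, which by the Gauss equation above is the desired foliation by constant Gauss curvature surfaces.

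The point requiring the most care, and where I expect the real work to lie, is verifying that the abstract perturbation and uniqueness argument genuinely carries over from $f=\sigma_{1}$ to $f=\sigma_{2}$ in dimension $n=2$. The background slices $\{x=\mathrm{const.}\}$ are asymptotically umbilic with principal curvatures tending to $1$, so the linearization of $\sigma_{2}$ at such a slice is elliptic (its Newton tensor is asymptotically a positive multiple of the identity) and, after rescaling and passing to the boundary, reduces to the same Yamabe-type scalar operator $\mathbb L_{h_{0}}$ as in the CMC case, up to a positive multiplicative constant. One must then verify that the nonlinear remainder in the $\sigma_{2}$-expansion around the umbilic background decays fast enough for the contraction mapping in the proof of Theorem~\ref{th:theorem1.1} to close, and that the monotonicity-of-$\sigma_{2}$ argument providing uniqueness behaves just as monotonicity of mean curvature does in the CMC case. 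Once these are in place, the negative-Yamabe existence and uniqueness theorem transfers verbatim to each end, completing the proof.
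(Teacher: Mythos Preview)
Your proposal is correct and follows essentially the same route as the paper's \S 6: reduce to the $\sigma_k$ foliation theorem with $n=k=2$, using that a quasi-Fuchsian manifold is (weakly) Poincar\'e--Einstein with conformal infinity of negative Yamabe type; your explicit use of the Gauss equation $K_{\mathrm{int}}=-1+\sigma_2$ makes precise a step the paper leaves implicit. One small slip: the relevant linearization at $\phi_0=0$ is $\Delta_{h_0}-2\kappa_2=\Delta_{h_0}+R_{h_0}$ (which, as you say, coincides with the CMC linearization up to a positive factor), not the operator $\mathbb{L}_{h_0}$ itself, which for $n=2$ reduces to $-\Delta_{h_0}$; this does not affect your argument, since invertibility holds once $R_{h_0}<0$.
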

Note that we already know that the ends of these manifolds admit unique CMC foliations by Theorem~\ref{th:theorem1.1}. 

Quite recently we learned of recent work by Espinar, Galvez and Mira \cite{EGM} which is related to all of this.
They too study the correspondence between scalar curvature type equations on the boundary and Weingarten 
hypersurfaces in the interior, but they work only in $\HH^{n+1}$ and use many special properties of that 
space; furthermore, their Weingarten conditions are different from the more familiar CMC condition we study here.

This paper is organized as follows. The next section describes the geometry of conformally compact metrics.
The notion of monotone CMC foliations is introduced and related to uniqueness of the foliation in \S 3.
The main calculations of the second fundamental form and mean curvature of the level sets of special
boundary defining functions, and the effect on these quantities of conformal changes on the boundary,
is carried out in \S 4. This leads to the proofs of the various existence theorems in \S 5. The
final \S 6 discusses the alterations needed to prove analogous results for foliations with constant
$\sigma_k$ curvature leaves. 

\section{Asymptotically hyperbolic metrics}
In this section we review the relevant aspects of the geometry of conformally compact metrics. 

\subsubsection*{Conformally compact metrics}
Let $M$ be the interior of a smooth compact manifold with boundary. A metric $g$ on $M$ is called  conformally  
compact if $g = \rho^{-2}\olg$, where $\olg$ is a metric on the closed manifold with boundary which is smooth 
and nondegenerate up to the boundary, and $\rho$ is a smooth defining  function for $\del M$, i.e.\  $\rho = 0$ 
only on $\del M$ and $d\rho \neq 0$ there. We say that $g$ has a $\calC^{3,\alpha}$ conformal compactification
if for some (and hence any) smooth boundary defining function $\rho$, $\olg$ has a $\calC^{3, \alpha}$ extension up 
to $\del M$. (This is not the most general nor the most invariant way of stating this condition.)  
A brief calculation shows that the curvature tensor of $g$  has the form 
\[
R_{ijk\ell} = -|d\rho|_{\olg}^2 \, (g_{ik}g_{j\ell} - g_{i\ell}g_{jk}) + \calO(\rho),
\] 
so we say  that $(M,g)$ is asymptotically hyperbolic (AH) if $|d\rho|_{\olg} = 1$ when $\rho = 0$.  
This is an intrinsic condition, i.e.\ is independent of the choice of factors $\rho$  and $\olg$, 
since it can also be written as $|d\log\rho|_g \to 1$ when $\rho \to 0$; the interpretation is 
that $-\log \rho$ behaves asymptotically like a distance function for $g$. 

The conformal infinity of a conformally compact metric $(M,g)$ is the conformal class $\frakc(g)$ 
on $\del M$ defined in (\ref{eq:confinf}). This is the beginning of a correspondence between the 
interior Riemannian geometry of $g$ and conformal geometry on the boundary which is a key motivation 
for studying conformally compact metrics, see \cite{FG} for more on this.

\subsubsection*{Normal form and special bdf's} Any conformally compact metric can be put into the normal 
form (\ref{eq:norform}) in a neighborhood of infinity. More specifically, let $(M,g)$ be AH and 
suppose that $h_0$ is any $\calC^{3, \alpha}$  metric on $\del M$ which represents the conformal class $\frakc(g)$. 
Graham and Lee \cite{GL} proved that there exists a unique defining function $x$ for $\del M$ in some 
neighborhood $\calU$ of the boundary which satisfies the two conditions:
\begin{equation}
|d\log x|_g^2 \equiv 1, \quad \mbox{in}\ \ \calU, \qquad  \mbox{and}\quad \left. x^2 g\right|_{T\del M} = h_0.
\label{eq:HJpre} 
\end{equation}
To see this, choose an arbitrary smooth boundary defining function $\rho$ and set $\olg = \rho^2 g$. 
Then $h_0 = e^{2\phi_0} \, \left. \olg \right|_{T\del M}$ for some function $\phi_0$ on $\del M$, and the new 
boundary defining function $x = e^{\phi}\rho$ satisfies (\ref{eq:HJpre}) if and only if $\phi$ satisfies the 
nondegenerate Hamilton-Jacobi equation
\begin{equation}
| d\rho + \rho \, d \phi|_{\olg}^2 \equiv 1\qquad  \Longleftrightarrow 
\qquad 2\, \langle \partial_\rho, \nabla^{\olg} \phi\rangle_{\olg} 
+ \rho \, |\nabla^{\olg} \phi|_{\bar g} = \frac{1- |\nabla^{\olg} \rho|^2_{\olg}}{\rho}  \, .
\label{eq:HJ}
\end{equation}
This is a noncharacteristic first order differential equation which has a unique solution with given boundary 
data $\phi (0, \cdot) = \phi_0$. The corresponding function $x$ is called a special boundary defining function, 
or special bdf. 

It is useful to think of the solution $\phi(x,y)$ of the Hamilton-Jacobi  equation (\ref{eq:HJ}) as the result of 
applying an `extension operator' $\calE$ to the boundary data  $\phi_0$, so we write $\phi = \calE (\phi_0)$. 
\begin{lemma} 
For any sufficiently small ball around the origin, $\calV \subset \calC^{2,\alpha}(\del M)$, there exists a 
collar neighbourhood $\calU$ of $\del M$ in $\overline M$ such that the solution operator 
\[
\calE: \calV \longrightarrow \calC^{2,\alpha}(\overline{\calU})
\] 
is continuous.
\end{lemma}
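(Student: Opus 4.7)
The plan is to treat the Hamilton-Jacobi equation (\ref{eq:HJ}) by the method of characteristics. Using $\langle \partial_\rho, \nabla^{\olg}\phi\rangle_{\olg} = \partial_\rho\phi$, the equation simplifies to
\[
2\, \partial_\rho \phi + \rho\, |\nabla^{\olg}\phi|^2_{\olg} = \psi, \qquad \psi := \frac{1-|\nabla^{\olg}\rho|^2_{\olg}}{\rho}.
\]
The asymptotic hyperbolicity condition $|\nabla^{\olg}\rho|^2_{\olg} = 1$ on $\del M$ guarantees that $\psi$ extends as a $\calC^{2,\alpha}$ function up to $\del M$, since $\olg \in \calC^{3,\alpha}$. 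Moreover, the coefficient $2$ in front of $\partial_\rho \phi$ shows that $\{\rho = 0\}$ is noncharacteristic, so the Cauchy problem with datum $\phi|_{\del M} = \phi_0$ falls within classical local existence theory for nonlinear first-order PDE.

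Writing $F(\rho, y, \phi, p, q) = 2p + \rho\,|\nabla^{\olg}\phi|^2_{\olg} - \psi$ with $p = \partial_\rho \phi$ and $q = d_y\phi$, the characteristic system evolves the jet $(\rho, y, \phi, p, q)$ in $s$. Initial data at $s = 0$ are $\rho = 0$, $y = y_0$, $\phi = \phi_0(y_0)$, $q = d\phi_0(y_0)$, and $p = \psi(0, y_0)/2$ (determined by $F = 0$). Since $\partial F/\partial p = 2 + \calO(\rho)$ is uniformly bounded below in a fixed boundary collar, the resulting characteristic curves remain transverse to the level sets of $\rho$ and sweep out a collar $\calU$ whose size depends only on $\olg$, as long as $\|\phi_0\|_{\calC^{2,\alpha}}$ is below a fixed threshold. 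This determines the small ball $\calV$ in the statement.

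Continuous dependence of $\phi$ on $\phi_0$ then reduces to continuous dependence of ODE trajectories on their initial data, applied uniformly for $y_0 \in \del M$ and for the $1$-jet of $\phi_0$ at $y_0$. A more direct alternative is to use the implicit function theorem: linearising $F$ at any reference solution $\phi_*$ (for instance the one associated to $\phi_0 = 0$) yields the transport-type operator
\[
L\,\psi = 2\,\partial_\rho \psi + 2\rho\, \langle \nabla^{\olg}\phi_*, \nabla^{\olg}\psi\rangle_{\olg},
\]
whose Cauchy problem with prescribed boundary trace is boundedly invertible between suitable Hölder classes, again by noncharacteristicity, and the continuity of $\calE$ in a neighbourhood of $0$ follows.

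The main obstacle is not the abstract strategy, which is classical, but the bookkeeping of regularity: a first-order nonlinear problem generically loses one derivative, so to produce a $\calC^{2,\alpha}$ solution on $\overline{\calU}$ one must verify that the coefficients coming from $\olg$ (entering through $|\nabla^{\olg}\cdot|^2$), the right-hand side $\psi$, and the $1$-jets of $\phi_0$ combine correctly through either the characteristic or IFT argument. The hypothesis $\olg \in \calC^{3,\alpha}$ is there precisely to supply the extra derivative on the ambient geometry that absorbs this loss.
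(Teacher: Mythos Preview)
Your approach via the method of characteristics is exactly the one the paper takes, and your identification of the noncharacteristic structure and of the role of the hypothesis $\olg\in\calC^{3,\alpha}$ is correct. The only substantive gap is that you flag the regularity bookkeeping as ``the main obstacle'' and then do not carry it out; the paper's contribution is precisely to do this count. Concretely: the Hamiltonian vector field has components $\nabla F$ which are $\calC^{1,\alpha}$ in $(\rho,y)$ (this uses $\olg\in\calC^{3,\alpha}$ through $A$) and polynomial in $(p,q)$; the initial data are $\calC^{1,\alpha}$ in the transverse parameter $\eta$, the worst term being $q(0,\eta)=\partial_y\phi_0(\eta)$. Standard ODE theory then gives a flow that is $\calC^{1,\alpha}$ in $(s,\eta)$; the inverse function theorem for $(s,\eta)\mapsto(\rho,y)$ preserves $\calC^{1,\alpha}$; hence $p,q\in\calC^{1,\alpha}(\rho,y)$, and integrating $\partial_\rho\phi=p$ from $\rho=0$ yields $\nabla\phi\in\calC^{1,\alpha}$, i.e.\ $\phi\in\calC^{2,\alpha}$. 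Continuity of $\calE$ then follows from continuous dependence of ODE solutions on initial data, as you say. Your alternative IFT/linearised-transport route is viable but not what the paper does, and would require its own regularity check for the inverse of $L$ between H\"older spaces.
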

\begin{proof} 
We shall simply trace through the standard proof of existence for this class of equations to make sure that
the regularity is as stated.

First, rewrite the equation as
\[
F(\rho,y, d\phi) := \del_\rho \phi - \frac12 \, \rho  \, |\nabla^{\olg}\phi|^2_{\olg} + A(\rho,y) = 0,
\]
where $A(\rho,y)$ is the inhomogeneous term on the right in (\ref{eq:HJ}). Thus $F(\rho,y,p,q)$ is $\calC^{2,\alpha}$ 
in $(\rho,y)$ and a quadratic polynomial with coefficients vanishing at $\rho=0$ in $(p,q)$ (which represent 
the components $(\del_\rho \phi, \del_y \phi)$ of $d\phi$). We first find the graph of $d\phi$, parametrized by some auxiliary 
parameters $(s,\eta_1, \ldots, \eta_n)$. Solve the Hamiltonian system with independent variable $s$ and initial conditions
\[
\frac{d\rho}{ds}=F_p,\quad \frac{dy}{ds}=F_q,\quad \frac{dp}{ds} = -F_\rho,\quad \frac{dq}{ds} =-F_y 
\]
with initial data \[
\rho(0,\eta) = 0,\  y(0,\eta) = \eta,\ p(0,\eta) = F(0,\eta,0,0) = A(0,\eta),\  q(0,\eta) = \del_y \phi_0(\eta).
\]
The components of $\nabla F$ are all $\calC^{1,\alpha}$ or better, and the initial condtions 
are also $\calC^{1,\alpha}$, so by standard ODE theory, there is a unique local solution 
$(\rho(s,\eta),y(s,\eta),p(s,\eta),q(s,\eta))$ which is $\calC^{1,\alpha}$. To invert the map $(s,\eta) \mapsto 
(\rho,y)$, simply note that since the Jacobian of this transformation at $\rho = 0$ is the identity, we can apply 
the inverse function theorem. This gives a local $\calC^{1,\alpha}$ inverse, and hence $p$ and $q$ are 
$\calC^{1,\alpha}$ functions of $(\rho,y)$. 
Now solve $\phi_\rho=p(\rho,y)$ by integrating from $\rho = 0$; then $\phi_y=q(\rho,y)$ automatically
holds by the usual Hamiltonian formalism. Therefore, $\nabla \phi(\rho,y)\in \calC^{1,\alpha}$, 
so $\phi \in \calC^{2,\alpha}$ as claimed.  This proof is local in $\rho$, but may be carried out
globally on $\del M$, which gives the precise statement of the theorem. 
\end{proof}

For later reference, now suppose that $x$ and $\wh{x}$ are two special boundary defining functions which
induce boundary metrics $h_0$ and $\wh{h}_0 = e^{2\phi_0}h_0$, respectively and let $\olg = x^2 \, g$. 
Then $\wh{x} = e^{\phi} \, x$ where $\phi(x,y)$ satisfies
\begin{equation}
- 2Ê\, \del_x \phi = x \, | {\nabla}^{\olg}  \phi |^2_{\olg}.
\label{eq:HJsbdf}
\end{equation}
From this it follow that if $\phi_0 \in \calC^{2,\alpha}(\del M)$, then
\begin{equation}
\phi(x,y) = \phi_0(y) - \frac14 |\nabla^{h_0} \phi_0 |^2_{h_0} \, x^2 + \calO(x^{2+\alpha}).
\label{eq:HJsbdf2}
\end{equation}

Now, given $h_0 \in \frakc(g)$ and associated special bdf $x$, define $\olg$  as above, and write
$\olN = {\nabla}^{\olg} x$.  Then, using the exponential map with respect to $\olg$, 
\[
[0,x_0) \times \del M \ni (x,y) \longmapsto \Phi_{h_0}(x,y) := \exp_{y}(x \olN),
\]
defines a diffeomorphism between $[0,x_0)_x \times \del M$ and $\calU$, and also identifies each level set 
$\{x =  \mbox{const.} \}$ with $\del M$. By Gauss' Lemma, 
\begin{equation}
\Phi^*_{h_0}(g) = \frac{dx^2 + h(x)}{x^2},
\label{eq:norform2}
\end{equation}
where $h(x)$ is a family of metrics on $\del M$ which depends on $x \in [0,x_0)$.  
This exhibits the bijective correspondence between elements $h_0 \in \frakc(g)$ and 
special boundary defining functions. 

In this paper, we assume that $\olg$ has a $\mathcal C^{3,\alpha}$ extension up to $\del M$, which
implies that $h(x)$ admits a second order Taylor expansion in powers of $x$, 
\begin{equation}
h(x) = h_0 + h_1 \, x + h_2 \, x^2 + \calO (x^{3}) , 
\label{eq:expansion}
\end{equation}
where the coefficients $h_j$ are symmetric two-tensors on $\del M$; these can be calculated using the formula
\begin{equation}
h_j = \frac{1}{j!} \, \calL_{\olN}^{(j)} \olg \, |_{x=0}.
\label{eq:Liederiv}
\end{equation}
Here $\calL$ is the Lie derivative, and we use that $\calL_{\olN} \, dx^2 = 0$ so $\calL_{\olN}\, \olg = \calL_{\olN} \, h$. 
In particular $h_j$ is $\mathcal C^{3-j}$, for $j=0,1,2$. 

\subsubsection*{Special bdf's and hypersurfaces}
If $x$ is a special bdf and $x' \in (0, x_0)$, then $r = -\log (x/ x')$ is the signed distance function for the 
hypersurface $\Sigma =  \{ r=0 \} = \{ x = x' \}$ near $\del M$. Conversely, if $\Sigma$ is any hypersurface in 
$M$ for which the exponential map from the outward pointing normal bundle $N^+ \Sigma$ to the exterior of 
$\Sigma$ in $M$ is a diffeomorphism, and if $r = \mbox{dist}_g(\cdot, \Sigma)$ (by which we mean the signed distance 
function which is positive outside $\Sigma$), then $x = e^{-r}$ is a special bdf. As above, it induces a metric 
$h_0 \in \frakc(g)$, as the boundary trace of $x^2 g$ via $x^2 g = \olg = dx^2 + h(x)$ where $h(0) = h_0$. 

These simple observations provide a key for what is to follow, namely that because of this bijective correspondence 
between elements in $\frakc(g)$ and outwardly convex hypersurfaces, one may study the geometry of these hypersurfaces 
by methods of conformal geometry. A well-known but never published paper \cite{Ep} by Epstein discusses this 
correspondence in great detail in three-dimensional hyperbolic space. 

\subsubsection*{Poincar\'e-Einstein metrics}
A particularly interesting class of AH manifolds are the Poincar\'e-Einstein (PE) spaces, where the metric $g$ is 
Einstein with $\Ric(g) + ng = 0$. This Einstein condition forces numerous relationships between the coefficient 
tensors $h_j$ in the Graham-Lee normal form of $g$. In particular, for $j < n/2$, $h_{2j+1} = 0$, while each $h_{2j}$ 
is given by a conformally natural partial differential operator of order $2j$ applied to $h_0$. For example,
$h_2 = -P_{h_0}$, where
\[
P_{h_0} = \frac{1}{n-2}\left(\Ric(h_0) - \frac{R_{h_0}}{2(n-1)} \, h_0\right) ,
\]
is the Schouten tensor of $h_0$. 

We do not need the full force of the PE condition in this paper, but shall often work with metrics $g$ which 
are {\it weakly Poincar\'e-Einstein}, in the sense that
\[
h(x) = h_0 + h_2 \, x^2 + \calO(x^{3}), \qquad \mbox{with} \qquad h_2 = - P_{h_0}\, ;
\]
this encompasses many interesting cases

\section{CMC foliations}
We now present some general facts about hypersurface foliations near infinity in conformally compact manifolds where 
each of the leaves has CMC, and also review some familar examples.

\subsubsection*{Foliations and defining functions} Let $(M,g)$ be an AH manifold, $\calU$ a neighborhood of infinity, and 
$\calF = \{\Sigma_\tau\}$, $0 < \tau < \tau_0$, a foliation of $\calU$ by CMC hypersurfaces. We suppose that $\Sigma_\tau 
\to \del M$ as $\tau \searrow 0$, and that for each $\tau$ the exponential map is a diffeomorphism from the outward
pointing normal bundle of $\Sigma_\tau$ to the unbounded component of $M \setminus \Sigma_\tau$. 

\begin{definition} 
We say that $\calF$ is monotone increasing or decreasing if the mean curvature $H(\Sigma_\tau) = H_\tau$ is a monotonically 
strictly increasing or decreasing function of $\tau$. In either of these cases, we say that $\calF$ is a monotone foliation.
\end{definition}

There is no natural choice for the parameter $\tau$ which indexes the leaves. The foliations constructed here are perturbations 
of the level sets $\{x=\e\}$ where $x$ is a particular special bdf, so $\e$ is a reasonable choice of parameter. For 
other foliations near infinity in an AH space, there may not exist any choice of parametrization of the leaves which 
extends smoothly to $\olM$ and is a defining function for $\del M$. This is related to whether the leaves 
converge to $\del M$ more rapidly in some regions than others.

\subsubsection*{Uniqueness}
As already noted, one motivation for finding CMC foliations in the first place is that they might be canonical objects.
We now describe one situation in which this is the case.
\begin{proposition}
Suppose that $\calF$ is a monotone decreasing CMC foliation in one end of a conformally compact manifold $(M,g)$. Then 
$\calF$ is unique amongst all CMC foliations with compact leaves tending to $\del M$ in that end.
\label{pr:unic}
\end{proposition}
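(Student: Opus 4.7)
The plan is to exploit the strict monotonicity of $c \mapsto H_c$ via an elementary extremum comparison against any competing leaf. Let $\tau$ denote a smooth function on $\calU$ whose level sets are the leaves of $\calF$, with $\Sigma_c := \tau^{-1}(c)$ and $\tau \searrow 0$ as one approaches $\del M$; the outward unit normal of each $\Sigma_c$ then points in the direction of decreasing $\tau$, i.e.\ toward $\del M$. Let $\calF' = \{\Sigma'_s\}$ be any other CMC foliation with compact leaves tending to $\del M$ in the same end. Since the leaves of $\calF'$ approach $\del M$, each $\Sigma'_s$ with $s$ sufficiently small lies entirely inside $\calU$, so it suffices to show that each such $\Sigma' = \Sigma'_s$ coincides with some leaf $\Sigma_c$ of $\calF$.

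Fix such a $\Sigma'$ with constant mean curvature $H'$. Since $\Sigma'$ is compact, $\tau|_{\Sigma'}$ attains its maximum $c_+$ at some $p_+$ and its minimum $c_-$ at some $p_-$. At $p_+$ the leaf $\Sigma_{c_+}$ is tangent to $\Sigma'$ and, locally, $\Sigma' \subset \{\tau \le c_+\}$; thus $\Sigma'$ lies on the outward normal side of $\Sigma_{c_+}$. Writing the two surfaces as graphs over their common tangent plane at $p_+$, the graph of $\Sigma_{c_+}$ lies below that of $\Sigma'$, and comparing Hessians at the common critical point of the graph functions yields $H_{c_+} \geq H'$. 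The symmetric argument at $p_-$, where $\Sigma_{c_-}$ lies on the outward normal side of $\Sigma'$, gives $H_{c_-} \leq H'$, so altogether
\[
H_{c_-} \;\leq\; H' \;\leq\; H_{c_+}.
\]

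The monotone decreasing hypothesis on $\calF$ enters decisively at this point. Since $c_- \leq c_+$ and $c \mapsto H_c$ is strictly decreasing, we have $H_{c_-} \geq H_{c_+}$, with equality if and only if $c_- = c_+$. Combined with the display above this forces $H_{c_-} = H' = H_{c_+}$, and hence $c_- = c_+ =: c$. Thus $\tau$ is constant equal to $c$ on $\Sigma'$, so $\Sigma' \subset \Sigma_c$; since both are compact connected hypersurfaces of the same dimension, $\Sigma' = \Sigma_c$. Running this argument over all sufficiently small $s$ shows that $\calF$ and $\calF'$ agree in a neighbourhood of infinity.

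The main conceptual point, and the only real obstacle, is that the direction of the mean curvature comparison at the touching points must oppose the direction of the monotonicity in order for the two resulting inequalities to pinch; this is precisely what ``monotone decreasing'' provides given the convention that outward normals point toward $\del M$, while a monotone \emph{increasing} foliation would yield compatible inequalities and the argument would fail. The tangent-hypersurface mean curvature inequality itself is a routine graph comparison at a critical point, and in particular no strong maximum principle is needed because the monotonicity does the work of pinching the extrema together.
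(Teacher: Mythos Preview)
Your argument is correct and is essentially the same as the paper's: sandwich the competing leaf $\Sigma'$ between the innermost and outermost leaves of $\calF$ that touch it, apply the mean curvature comparison at the two tangency points to get $H_{c_-}\le H'\le H_{c_+}$, and then use strict monotone decrease to force $c_-=c_+$. The only cosmetic differences are that you locate the extremal touching leaves via the max and min of $\tau|_{\Sigma'}$ rather than by the ``outside/inside'' description, and you are more explicit that only the pointwise Hessian comparison (not the strong maximum principle) is needed.
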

\begin{proof}
The uniqueness follows by a simple application of the maximum principle. Let $\calF$ be the given monotone decreasing 
foliation, and let $\calF'$ be any other CMC foliation and $\Sigma'$ any leaf of $\calF'$. Consider the set of leaves 
$\Sigma_\tau$ `outside' $\Sigma'$. There is a maximal value $\tau_1$ for which this is true, and clearly $\Sigma_{\tau_1}$ 
meets $\Sigma'$ tangentially. Similarly, consider the set of leaves $\Sigma_\tau$ which are inside $\Sigma'$, and let 
$\tau_2$ denote the smallest value of $\tau$ for which this is true. Again, $\Sigma_{\tau_2}$ meets $\Sigma'$ tangentially. 
Let $H_1$, $H_2$ and $H$ denote the mean curvatures of $\Sigma_{\tau_1}$, $\Sigma_{\tau_2}$ and $\Sigma'$, respectively.  
On the one hand, by the comparison principle for mean curvature,  since $\Sigma_{\tau_2}$ lies on the mean convex 
side of $\Sigma$ and is tangent to it, we have $H_2 \geq H$. Similarly,  since $\Sigma$ lies on the mean convex side of 
$\Sigma_{\tau_1}$ and is tangent to it, we have $H \geq H_1$. Since $\calF$ is monotone decreasing, $H_1 \geq H_2$, with 
equality if and only if $\Sigma_{\tau_1} = \Sigma_{\tau_2}$. Therefore, we conclude that $H_1 =H_2$ and  
$\Sigma_{\tau_1} = \Sigma_{\tau_2} = \Sigma'$, as required.
\end{proof}

\subsubsection*{Basic examples}
We present now three basic examples of CMC foliations near infinity.

\medskip

\noindent {\bf 1. Geodesic balls} Let $(M,g)$ be the hyperbolic space $\HH^{n+1}$. Fix the origin $o$, and let $S_R$ 
denote the geodesic ball of radius $R > 0$ centered at $o$. We can recover the conformally compact form of the metric 
from the geodesic polar  coordinate expression $g = dr^2 + \sinh^2r\, d\theta^2$ by setting $x = 2 e^{-r}$, so that
\[
g = \frac{dx^2 + (1 - \frac{x^2}{4})^2\, d\theta^2}{x^2}.
\]
Note that this is the special bdf corresponding to the standard round metric $d\theta^2$ in the conformal infinity of $g$. 

We set $\Sigma_\e = S_R$ where $R = -\log(\e/2)$. Each such leaf is totally umbilic, with  second fundamental form
\[
\II(\Sigma_\e) = \e^{-2}\left(1 - \frac{\e^4}{16}\right)\, d\theta^2, 
\]
and all principal curvatures equal to $(4+ \e^2 )/(4- \e^2 )$. Hence this is a Weingarten  foliation no matter what 
the function $f$. The mean curvature
\[
H(\Sigma_\e) = n \, \left(\frac{4+ \e^2 }{4- \e^2 }\right),
\]
is monotone increasing in $\e$, with
\[
\lim_{\e \searrow 0} H(\Sigma_\e) = n, \qquad \lim_{\e \nearrow 2} H(\Sigma_\e) = \infty.
\]
The latter limit corresponds to geodesic spheres of radius tending to zero. There is a family  of such foliations 
obtained by shifting the center of the balls, so uniqueness clearly fails.

\medskip

\noindent {\bf 2. Equidistant hypersurfaces}  The second example concerns the  family of hypersurfaces in the warped 
product $M = \RR \times Y$, with metric $g = dt^2 + \cosh^2 t\, h$, which are equidistant from the `core' $\{t=0\}$.  
Here $(Y,h)$ is any compact $n$-manifold. When $(Y,h)$ is Einstein, with
Ricci curvature $-(n-1)h$, then $(M,g)$ is also Einstein with Ricci curvature $-ng$. In particular, if $\dim Y = 2$ and 
$(Y,h)$ is hyperbolic, then $M$ is called a Fuchsian hyperbolic $3$-manifold. However, $(M,g)$ is always conformally 
compact,  as can be seen by setting $x = 2e^{-t}$ once again to get 
\[
g = \frac{dx^2 + (1 + \frac{x^2}{4})^2 h}{x^2}.
\]
Thus $x$ is a special bdf corresponding to the metric $h$ in the conformal infinity of $g$ at $t=\infty$.
(The situation at the end $t=-\infty$ is analogous.) 

The surfaces $\Sigma_\e = \{t = -\log(\e/2)\}$ foliate this end, and have second fundamental form
\[
\II(\Sigma_\e) = \e^{-2} \, \left(1 - \frac{\e^4}{16}\right)\, h,
\]
so these are again totally umbilic with principal curvatures $(4- \e^2)/(4+\e^2)$. We have 
\[
H(\Sigma_\e) = n \, \left(\frac{4- \e^2}{4+ \e^2}\right),
\]
which is monotone decreasing in $\e$, with
\[
\lim_{\e \searrow 0} H(\Sigma_\e) = n, \qquad \lim_{\e \nearrow 2} H(\Sigma_\e) = 0,
\]
the latter limit corresponding to the central core at $t=0$. 

\medskip

\noindent {\bf 3. Horospheres} The final example is of a horospherical foliation in a conformally compact
 manifold $(M,g)$ with end diffeomorphic to the product $\RR^+ \times T^n$ and with warped product metric $g = 
dt^2 + e^{2t}\, h$, where  $h$ is the flat metric on the torus $T^n$. Now set $x = e^{-t}$, so $g = (dx^2 + h)/x^2$, 
and if $\Sigma_\e = \{t = -\log \e\}$, then
\[
\II (\Sigma_\e) =  x^{-2}h
\]
has all principal curvatures equal to $1$, and hence
\[
H(\Sigma_\e) = n
\]
for all $\e$. In particular, this foliation is neither (strictly) monotone increasing or decreasing. 

\section{Geometric calculations}
We now present a series of calculations for the second fundamental forms and mean curvatures of the level sets of a 
special bdf. The first step is to compute these for a given special bdf $x$, and after that we examine the effect of 
changing the conformal representative on the boundary.

\subsubsection*{Second fundamental form  of level sets}
Suppose that $(M,g)$ is an AH metric and fix $h_0 \in \frakc(g)$ and the corresponding special bdf $x$.  
Set $\olg = x^2 g$ and $\olN = \nabla^{\olg} x$; we also set $N = x^{-1} \,  \nabla^g x$. 
We now calculate the second fundamental form and mean curvature of the level sets $\{x = \mbox{const.}\}$. 

To begin this calculation, recall the standard formula for the second fundamental form of the level sets 
$\{x = \mbox{const.}\}$
\[
\II = -\frac12 \calL_N g,
\]
which holds because $N$ is the gradient of the distance function from each level set. We recall two standard facts:
\begin{equation}
\calL_W(f \kappa) = f \calL_W \kappa + (Wf)\, \kappa, \qquad \mbox{and} \qquad 
\calL_{(fW)}\, \kappa = f \calL_W \kappa + df \circ \iota_W \kappa,
\label{eq:ldi}
\end{equation}
here $\beta \circ \gamma = \beta \otimes \gamma + \gamma \otimes \beta$. 

Since $N = x \, \olN$ and $g = x^{-2} \, \bar g$, we compute
\begin{equation*}
\begin{split}
\calL_N g = \calL_{x\olN}(x^{-2}\olg) & = x \, \calL_{\olN}(x^{-2}\olg) + dx \circ \iota_{\olN}(x^{-2}\olg) \\
 & = x \left(x^{-2} \, \calL_{\olN}\olg - 2x^{-3}\olg\right) + 2 \, x^{-2}dx^2 \\
 & = x^{-2}\left(x \,  \calL_{\olN}\olg - 2\olg + 2 \, dx^2\right) \\
 & = x^{-2}\left( x\, \calL_{\olN} \, h - 2 \, h \right),
 \end{split}
\end{equation*}
since $\calL_{\olN} \, dx^2 = 0$. Write $\calL_{\olN} h = \del_x h$ for simplicity; then the second fundamental 
form of  the level sets $\{x= \mbox{const.}\}$ is given by
\begin{equation}
\II = \frac12 x^{-2} (2 \, h - x\del_x \, h ).
\label{eq:sff}
\end{equation}

The metric induced on $\{x = \mbox{const.}\}$  is given by $x^{-2} \, h$ and hence the mean curvature of this 
hypersurface is given by 
\begin{equation}
H = \frac12\, \tr^{x^{-2} h}\left( x^{-2}( 2\,  h - x \, \del_x h  )\right)  =  n - \frac12\, \tr^h \left( x \, \del_x h \right).
\label{eq:H1}
\end{equation}

The expansion for $h(x)$ then yields
\begin{equation}
H = n - \frac12 \left(\tr^{h_0}h_1\right) \, x - \left( \tr^{h_0}h_2 - \frac12 \, \|h_1\|_{h_0}^2\right) \, x^2 + 
\calO(x^{3}).
\label{eq:H2}
\end{equation}
The value of this function at any point is the mean curvature (with respect to the metric induced by $g$) 
of the level set of the special bdf $x$ through that point. In terms of the notation in (\ref{eq:k1k2}) we can write 
\[
H = n - \kappa_{1} \, x - \kappa_{2}  \, x^2 + \calO(x^{3}).
\]

\subsubsection*{Effect of conformal changes}
We next study the effect on various geometric quantities of changing the representative $h_0$ within the 
conformal class $\frakc(g)$. 

Set $\wh{h}_0 = e^{2\phi_0}h_0$.  We now have the special bdf $\wh{x} = e^{\phi} \, x$ associated to $\hat h_0$ and
the metric $\wh{g} = \wh{x}^{\,2}g = e^{2\phi}\olg$, and can write
\[
\wh{g} = d\wh{x}^{\,2} + \wh{h}(\wh{x}), \qquad \wh{h}(\wh{x}) = \wh{h}_0 +  \wh{h}_1 \, \wh{x} +  
\wh{h}_2 \, \wh{x}^{\,2} + {\mathcal O} (\wh{x}^{3}). 
\]
Clearly
\[
\wh{h}_0 = \left. \wh{g} \right|_{T\del M} = e^{2\phi_0}h_0\, ;
\]
the higher terms in the expansion for $h$ may be computed using 
\[
\widehat{h}_j = \left. \frac{1}{j!} \calL_{\widehat{N}}^{(j)}\, \wh{g} \right|_{\del M},
\]
where
\[
\wh{N} = \wh{x}^{-1} \, \nabla^g \, \wh{x} = x\, e^{-\phi} \,{\nabla}^{\olg} (e^{\phi}x) = x \, (\olN + x \, {\nabla}^{\olg}
\phi) := x\, Z, \qquad \mbox{where}\ \  Z = \olN + x \, {\nabla}^{\olg} \phi \, .
\]
In particular, to calculate the second fundamental form of  any level set $\{\wh{x} = \e \}$, we first compute 
\begin{equation*}
\begin{array}{rlll}
\calL_{\wh{N}} \, g & = & \calL_{xZ}(x^{-2}\olg) \\[3mm]
& = & x \, \calL_Z(x^{-2}\olg) + dx \circ \iota_Z(x^{-2}\olg) \\[3mm]
& = & x^{-1} \, \calL_Z \olg - 2x^{-2} (Zx) \olg + x^{-2}dx \circ \iota_Z \olg.
\end{array}
\end{equation*}
Using the decomposition of $Z$, this splits further as
\begin{equation*}
\calL_{\wh{N}} g = x^{-2} \, \left( x\, \del_x \, h - 2 \, h \right) + \calL_{{\nabla}^{\olg}\phi}\olg + 
2x^{-1}dx \circ d\phi - 2 \, x^{-1} \, \del_x \phi \,  \olg.
\end{equation*}
Since 
\[
\calL_{{\nabla}^{\olg} \phi} \olg = 2 \, \Hess^{\olg} (\phi),
\]
the second fundamental form of each level set $\{\wh{x} = \e  \}$  is 
\begin{equation}
\II (\phi_0) = \left( \frac12 \, x^{-2} \, \left( 2 \, h - x\, \del_x h \right) - x^{-1}\,  dx \circ d\phi - 
\Hess^{\bar g} (\phi) -\frac12  \, |{\nabla}^{\olg} \phi|^2_{\olg} \, \olg \, \right) \big|_{x \, e^\phi  = \e}\, .
\label{eq:sff1}
\end{equation}

The mean curvature of the level set $\{\wh{x} =  \e \}$ is the trace of $\II (\phi_0)$ with respect to the induced 
metric $\wh{x}^{\, -2} \wh{h}$ on this hypersurface. However, since there is no $d\wh{x}^{\, 2}$ component, 
we can simply take the trace with respect to $g$ directly; using (\ref{eq:HJsbdf}), this gives
\begin{equation*}
\begin{array}{rlll}
H(\phi_0) &= \left( \tr^{g} \II(\phi_0) \right) |_{x \, e^\phi = \e} = \left( \tr^{x^{-2}\olg} 
\II(\phi_0) \, \right) |_{x \, e^\phi = \e} \\[3mm]
& = \displaystyle \left( \frac 12 \, \tr^h  \left(  2 \, h - x \, \partial_x h  \right)  
- x^2 \left(\Delta_{\olg} \phi +  \frac{n-1}{2} \, 
| {\nabla}^{\olg} \phi|^2_{\olg} \right)\, \right) \big|_{x \, e^\phi = \e}. 
\end{array}
\end{equation*}
This proves the 
\begin{proposition}
The mean curvature of the hypersurface $\{\wh{x} =  \e \}$ is given by 
\begin{equation}
H(\phi_0,\e)  :=   n -  \e^2 \, \left( e^{-2\phi} \, \left( \frac 1{2} x^{-1}\, \tr^h  \partial_x h  + 
\Delta_{\olg} \phi + \frac{n-1}{2} |{\nabla}^{\olg}  \phi|^2_{\olg}  \right)\right) \big|_{x \, e^{\phi}= \e }.
\label{eq:Hhate}
\end{equation}
\label{pr:hconfch}
\end{proposition}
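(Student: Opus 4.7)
\medskip

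\noindent\textbf{Proof proposal.} The proposition is essentially a distillation of the computation already begun in the paragraph above it, so the plan is to finish organizing that computation, take the trace correctly, and use the Hamilton--Jacobi relation~(\ref{eq:HJsbdf}) to put the answer in the stated form. The starting point is the identity $\II(\phi_0) = -\tfrac12 \calL_{\widehat N} g$ valid for the level sets of the signed distance function, applied to the new special bdf $\widehat x = e^{\phi}\,x$; here $\widehat N = \widehat x^{-1} \nabla^g \widehat x = x\,Z$ with $Z = \overline N + x\,\nabla^{\overline g}\phi$.

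First I would reproduce the expansion of $\calL_{\widehat N}g$ using the two product rules in~(\ref{eq:ldi}), together with $\calL_{\overline N}\overline g = \partial_x h$ on one hand and $\calL_{\nabla^{\overline g}\phi}\,\overline g = 2\,\Hess^{\overline g}\phi$ on the other, to arrive at the displayed expression for $\II(\phi_0)$ in~(\ref{eq:sff1}). The one small point worth being careful about is the $x^{-1}\,dx\circ d\phi$ cross term and the $-x^{-1}\partial_x\phi\,\overline g$ piece coming from $-2x^{-2}(Zx)\overline g$: these arise because $\widehat N$ has a component along $\partial_x$ of size $1 + x\,\partial_x\phi$, and both contributions must be kept until the trace is taken.

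Next I would compute the mean curvature $H(\phi_0) = \tr^{\widehat x^{-2}\widehat h}\II(\phi_0)$. The key simplification is that $\II(\phi_0)$ has no $d\widehat x^{\,2}$ component along the leaf, so tracing with respect to the induced metric on the leaf is the same as tracing with respect to $g$ itself, hence with respect to $x^{-2}\overline g$. Carrying this out gives
\[
H(\phi_0) = \tfrac12\,\tr^h(2h - x\,\partial_x h) - x^{2}\bigl(\Delta_{\overline g}\phi + \tfrac{n}{2}|\nabla^{\overline g}\phi|^2_{\overline g}\bigr) - 2x\,\partial_x\phi - (\text{cross term from } dx\circ d\phi),
\]
where the last two terms must be combined carefully. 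The cross term $x^{-1}\,dx\circ d\phi$ contributes $x\,\partial_x\phi$ to the $g$-trace, and the Hamilton--Jacobi equation~(\ref{eq:HJsbdf}), $-2\,\partial_x\phi = x\,|\nabla^{\overline g}\phi|^2_{\overline g}$, converts all remaining $\partial_x\phi$ terms into $|\nabla^{\overline g}\phi|^2_{\overline g}$ terms, merging with the $\tfrac{n}{2}$-coefficient to produce the stated $\tfrac{n-1}{2}$. Finally, pulling out $-\varepsilon^2 e^{-2\phi} = -x^2$ on the locus $\{xe^{\phi}=\varepsilon\}$ and recognizing that $\tfrac12 x^{-1}\tr^h\partial_x h$ matches the trace piece from $\tfrac12\tr^h(2h - x\,\partial_x h) = n - \tfrac{x}{2}\tr^h\partial_x h$ gives~(\ref{eq:Hhate}).

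The only real obstacle is bookkeeping: several terms involving $\partial_x\phi$, $|\nabla^{\overline g}\phi|^2_{\overline g}$, and $dx\circ d\phi$ appear from different sources (the inner product with $\partial_x$, the trace of $\Hess^{\overline g}\phi$, and the conformal factor $x^{-2}$), and the coefficient $\tfrac{n-1}{2}$ emerges only after using~(\ref{eq:HJsbdf}) to eliminate $\partial_x\phi$. Once that algebraic combination is done correctly, the formula~(\ref{eq:Hhate}) follows directly.
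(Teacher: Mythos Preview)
Your proposal is correct and follows the paper's approach exactly: trace $\II(\phi_0)$ from (\ref{eq:sff1}) with respect to $g = x^{-2}\olg$ (legitimate since there is no $d\wh x^{\,2}$ component), then use the Hamilton--Jacobi identity (\ref{eq:HJsbdf}) to eliminate $\del_x\phi$. Your intermediate display is slightly off in the bookkeeping --- the trace of the $-\tfrac12|\nabla^{\olg}\phi|^2_{\olg}\,\olg$ term is $-\tfrac{n+1}{2}x^2|\nabla^{\olg}\phi|^2_{\olg}$ (not $\tfrac{n}{2}$), and the $-x^{-1}dx\circ d\phi$ term already \emph{is} the cross term and contributes exactly $-2x\,\del_x\phi$, so it should not be listed twice --- but once this is straightened out the combination $-2x\,\del_x\phi - \tfrac{n+1}{2}x^2|\nabla^{\olg}\phi|^2_{\olg} = -\tfrac{n-1}{2}x^2|\nabla^{\olg}\phi|^2_{\olg}$ falls out just as you anticipate.
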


These expressions for the second fundamental form and mean curvature conceal the fact that $\phi$ is actually a 
nonlocal function of $\phi_0$, as is the operation of restricting to the level set $\wh{x} = \e$.

Expanding $H(\phi_0,\e)$ in powers of $\e$ shows how the coefficients in (\ref{eq:H2}) are affected by a change of
representative $h_0$ of the conformal class $\frakc(g)$. Indeed, using (\ref{eq:HJsbdf2}) 
\[
\Delta_{\olg}\phi = \Delta_{h_0}\phi_0 - \frac{1}{2} \, |\nabla^{h_0}  \phi_0|^2_{h_0} + 
{\mathcal O}\left(x^\alpha \right),
\]
which shows that, in contrast to (\ref{eq:H2}), 
\begin{equation}
\begin{split}
H(\phi_0,\e)  & =   n -  \frac 12 \, e^{-\phi_0} \, \tr^{h_0}  h_1\, \wh{x} \hfill \\ & 
- e^{-2\phi_0} \, \left(   \tr^{h_0} h_2 - \frac12 \, \|h_1\|_{h_0}^2 +  
\Delta_{h_0} \phi_0 + \frac{n-2}{2} |{\nabla}^{h_0} \phi_0|^2_{h_0}  \right) \,  \wh{x}^2 + {\mathcal O} (\wh{x}^{3}).
\end{split}
\label{eq:ccpa}
\end{equation}

This shows how the functions $\kappa_{1}$ and $\kappa_{2}$ in (\ref{eq:k1k2}) transform under a conformal change of metric. 
Indeed, if $\kappa_{1}, \kappa_{2}$ and $\tilde \kappa_{1}, \tilde \kappa_{2}$ are the functions corresponding to the 
conformal representatives $h_0$ and $\tilde{h}_0 = e^{2\phi_0}h_0$, respectively, then (\ref{eq:ccpa}) shows that
\begin{eqnarray}
\tilde \kappa_{1}   & = &  e^{-\phi_{0}} \, \kappa_{1}  \, , \label{tildek1}  \\[2mm]
\tilde \kappa_{2}   & = & e^{-2\phi_0} \, \left(   \kappa_{2}  + \Delta_{h_0} \phi_0 + 
\frac{n-2}{2} |{\nabla}^{h_0} \phi_0|^2_{h_0}  \right) .\label{tildek2}
\end{eqnarray}
In particular, the conditions $\tr^{h_{0}} h_{1} \equiv 0$, $\tr^{h_{0}} h_{1}> 0$ or $\tr^{h_{0}} h_{1} < 0$ 
do not depend on the choice of the  conformal representative of $ \frakc(g)$. 

\section{Existence of CMC foliations}
We now establish three separate existence theorems, in order of increasing analytic difficulty, corresponding to 
whether $\kappa_1 = \tr^{h_0}h_1$ vanishes identically, or is everywhere positive or everywhere negative,  
espectively. After that we prove that each of these families of CMC surfaces fit together in a foliation.

\subsubsection*{Case 1: $\tr^{h_0}h_1 \equiv 0$} According to (\ref{eq:H2}) 
\[
H (0, \e)  = n -  \kappa_2 \, \e^2 + {\mathcal O} (\e^3), 
\]
where $\kappa_2 \in \calC^{1,\alpha} (\del M)$. Let us assume for the time being that $\kappa_2 = \mbox{const}$. 
We seek, for each $\e > 0$, a function $\phi_0 = \phi_0(\e)$ on $\del M$ so that
\[
 H (\phi_0(\e),\e) = n -  \kappa_2 \, \e^2, 
\]
By (\ref{eq:Hhate}), this is equivalent to 
\begin{equation}
\calN (\phi_0 ,\e) - \kappa_2  = 0,
\label{eq:eqHcase1}
\end{equation}
where, by definition,
\begin{equation}
\calN(\phi_0,\e) := e^{-2\phi} \, \left( \frac 1{2x} \, \tr^h  \partial_x h   +  \Delta_{\olg} \phi +  
\frac{n-1}{2} \, |{\nabla}^{\olg} \phi|^2_{\olg}  \right) \big|_{x \, e^{\phi}= \e }.
\label{eq:eqcase1}
\end{equation}
Since $\kappa_1 = 0$, it follows that $\tr^h \del_x h = 2 \, \kappa_2 x + \calO (x^2)$, so $\calN$ is $\calC^1$ 
in $\e$ up to $\e=0$ and $\calN(0,0) = \kappa_2$.  
 
\begin{theorem}
If $\kappa_1 = 0$, $\kappa_2 = \mbox{const.}$ and $\Delta_{h_0} -2 \, \kappa_2$ is invertible, then for 
each small $\e > 0$, there is a unique solution to (\ref{eq:eqHcase1}) close to $0$. The hypersurfaces $x \, e^{\phi } = \e$ 
constitute, as $\e$ varies, a monotone CMC foliation near $\del M$. This foliation is unique amongst all 
possible foliations if $\kappa_2 > 0$. 
\end{theorem}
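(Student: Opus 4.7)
The natural approach is to regard (\ref{eq:eqHcase1}) as a nonlinear equation in $\phi_0$ depending on the parameter $\e$, and to solve it by the implicit function theorem. Set
\[
F(\phi_0,\e) := \calN(\phi_0,\e) - \kappa_2, \qquad F: \calC^{2,\alpha}(\del M) \times [0,\e_0) \longrightarrow \calC^{0,\alpha}(\del M).
\]
The continuity of the extension operator $\calE$ (together with the smoothness of $\phi_0 \mapsto \olg$ on a collar) shows that $F$ is $\calC^1$ in a neighbourhood of $(0,0)$. Since $\kappa_1 \equiv 0$ and $\kappa_2$ is constant, (\ref{eq:H2}) gives $\tfrac{1}{2x}\tr^h \del_x h = \kappa_2 + \calO(x)$, so $\calN(0,0) = \kappa_2$ and $F(0,0) = 0$. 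To identify the linearisation at $(0,0)$, I would expand $H(\phi_0,\e)$ as in (\ref{eq:ccpa}): because $\kappa_1 = 0$ the first-order term drops out, and substituting $\wh x = \e$ yields
\[
\calN(\phi_0,\e) = e^{-2\phi_0}\left(\kappa_2 + \Delta_{h_0}\phi_0 + \tfrac{n-2}{2}|\nabla^{h_0}\phi_0|_{h_0}^2\right) + \calO(\e).
\]
Hence $D_{\phi_0} F(0,0) = \Delta_{h_0} - 2\kappa_2$, which is invertible by hypothesis. The implicit function theorem then produces a unique $\phi_0(\e) \in \calC^{2,\alpha}(\del M)$, depending $\calC^1$ on $\e$, with $\phi_0(0) = 0$ and $F(\phi_0(\e),\e) = 0$ for every small $\e \geq 0$.

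Next I would verify that the resulting family of hypersurfaces $\Sigma_\e = \{x\, e^{\phi(x,y;\e)} = \e\}$ actually foliates a neighbourhood of $\del M$. Since $\phi_0(\e) \to 0$ in $\calC^{2,\alpha}$ as $\e \to 0$, its extension $\phi(x,y;\e) = \calE(\phi_0(\e))$ is $\calC^1$-small on a collar, so $x \mapsto x\, e^{\phi(x,y;\e)}$ is strictly increasing in $x$ and $\Sigma_\e$ is a graph $x = X(y,\e)$ with $X(y,\e) = \e + o(\e)$. The Jacobian of $(y,\e) \mapsto (X(y,\e),y)$ at $\e = 0$ is the identity, so after shrinking $\e_0$ this map is a diffeomorphism onto a collar of $\del M$, and $\{\Sigma_\e\}$ is a genuine foliation. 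By construction each leaf has mean curvature $H_\e = n - \kappa_2\, \e^2$, which is strictly monotone in $\e$: decreasing when $\kappa_2 > 0$ and increasing when $\kappa_2 < 0$. In either case the foliation is monotone in the sense of the definition in Section~3.

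Finally, uniqueness among all CMC foliations when $\kappa_2 > 0$ is an immediate consequence of Proposition~\ref{pr:unic}, because in this case the foliation is monotone decreasing with $\Sigma_\e \to \del M$ as $\e \searrow 0$. The main technical obstacle is the $\calC^1$-regularity of the nonlinear map $F$ between the indicated Hölder spaces: one must carefully combine the continuity of $\calE$ with the refined expansion (\ref{eq:HJsbdf2}) for $\phi$ in powers of $x$ in order to differentiate the restriction operator $\phi \mapsto \phi|_{xe^\phi = \e}$ and to justify the expansion of $\calN(\phi_0,\e)$ up to order $\e$. Once this point is secured, the remaining assertions follow mechanically from the implicit function theorem and Proposition~\ref{pr:unic}.
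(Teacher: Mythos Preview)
Your proof is correct and follows essentially the same route as the paper: both apply the implicit function theorem after checking that $F(0,0)=0$ and that $D_{\phi_0}F(0,0)=\Delta_{h_0}-2\kappa_2$, and both deduce uniqueness from Proposition~\ref{pr:unic} when $\kappa_2>0$. The paper computes the linearisation by decomposing $\calN$ as (restriction)$\,\circ\,$(nonlinear PDE)$\,\circ\,\calE$ and applying the chain rule, noting that $D\calE_0\psi_0$ solves the linearised Hamilton--Jacobi equation $\partial_x\psi=0$ with data $\psi_0$, hence equals $\psi_0(y)$; your use of the expansion~(\ref{eq:ccpa}) is a shortcut to the same result. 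The one substantive difference is the foliation step: the paper defers this to a separate argument at the end of \S5, using the Jacobi operator $\calL_\Sigma=\Delta+\|\II\|^2+\Ric(N,N)$ and the maximum principle (when $\kappa_2>0$) to show the variation field $\dot\psi$ is one-signed, whereas you give a more elementary inverse-function-theorem argument on the map $(y,\e)\mapsto(X(y,\e),y)$. Your approach is shorter here; the paper's Jacobi-operator framework, on the other hand, is set up to handle all three cases ($\kappa_1>0$, $\kappa_1\equiv0$, $\kappa_1<0$) uniformly.
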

\begin{proof} The proof is a direct consequence of the implicit function theorem, but to see this we must compute
the linearization of $\calN$ at $\phi_0 = 0$. Rewrite $\calN$ as the composition of three operations:
the restriction $R(\phi_0,\e)$ to the hypersurface $\{x e^\phi = \e\}$, the nonlinear partial differential
operator 
\[
\phi \longmapsto e^{-2\phi} \, \left(\frac12 x^{-1}\tr^{h}\del_x h + \Delta_{\olg}\phi + \frac{n-1}{2}|\nabla^{\olg}\phi|^2_{\olg}\right),
\]
and the extension operator $\phi = \calE(\phi_0)$. By the chain rule, 
\[
\left. D_1 \calN \right|_{(0,\e)} = \left. D_1 R\right|_{(0,\e)} \circ \left(\Delta_{\olg} - x^{-1} \, \tr^{h}(\del_x h)\right) 
\circ D\calE_{0}(\psi_0).
\]
Here $\psi(x,y) := D\calE_0 \psi_0$ is the solution of the linearization of the Hamilton-Jacobi equation (\ref{eq:HJsbdf})
with initial condition $\psi_0$, so $\del_x \psi = 0$ and $\psi(0,y) = \psi_0$ and hence $\psi(x,y) = \psi_0(y)$. 
The differential of the restriction operator is not so easy to compute in general, but is simply the restriction to 
$\del M$ when $\e=0$. Altogether then,
\begin{equation}
D_1\calN_{(0,0)} \psi_0 = (\Delta_{h_0} - 2\kappa_2) \psi_0,
\label{eq:linN}
\end{equation}
which by assumption is invertible. Hence there exists a smooth function  $\e \to \phi_0(\e)$, with $\phi_0(0) =0$ 
and $\calN(\phi_0(\e),\e) - \kappa_2 =0$ for $0 \leq \e < \e_0$. The proof that these hypersurfaces 
form a foliation is deferred until the end of the section. The uniqueness statement follows because
$\kappa_2 > 0$ implies that the foliation is monotone decreasing so that Proposition 3.1 applies. \end{proof}

The transformation rule (\ref{tildek2} provides a way to reduce the general case where $\kappa_2$ is a 
function to this special case where $\kappa_2$ is constant. Indeed, suppose we have found a function 
$\bar \phi_0$ such that 
\begin{equation}
e^{-2 \bar \phi_0} \, \left( \Delta_{h_0} \bar \phi_0 + \frac{n-2}{2} \, |\nabla^{h_0} \bar \phi_0|^2_{h_0} +
\kappa_2 \right) - \bar \kappa_2  = 0,
\label{eq:YTipe}
\end{equation}
where $\bar \kappa_2$ is constant; then the term corresponding to $\kappa_2$ for the new metric 
$\bar h_0  = e^{2\bar \phi_0} \, h_0$ is this constant $\bar \kappa_2$. 

According to the result above, the existence of CMC foliations reduces to the existence of non degenerate solutions of 
(\ref{eq:YTipe}). We discuss this issue of solvability briefly now.  For the sake of simplicity, let us focus on 
the case $n \geq 3$. When $\kappa_2$ is an arbitrary smooth function, there may or may not be a solution to the
equation (\ref{eq:YTipe}).  We claim, however, that there is a solution, which is in fact unique, if the least 
eigenvalue $\lambda_1$ of the generalized conformal Laplacian 
\[
\mathbb L_{h_{0}} : = - \left( \Delta_{h_0} + \frac{n-2}{2}  \,  \kappa_2 \right)
\]
is negative. The proof is an adaptation of that for an analogous result for the Yamabe equation. To make the analogy
more clear, set $\bar \phi_0 = \frac{2}{n-2} \, \log u_0$, which transforms (\ref{eq:YTipe}) into the more 
familiar-looking equation
\begin{equation}
\mathbb L_{h_0} u + \frac{n-2}{2} \, \bar \kappa_2  \, u_0^{\frac{n+2}{n-2}} = 0.
\label{eq:YTE}
\end{equation}
To see that the sign of this least eigenvalue is independent of choice of conformal representative, we proceed
as follows. Let $\bar{h}_{0} = u_{0}^{\frac{4}{n-2}} \, h_{0}$ be two conformally related metrics. By direct computation
we deduce the general formula
\[
u_{0}^{\frac{n+2}{n-2}}  \, \Delta_{\bar h_{0}}  w =   \Delta_{h_{0}} \, (u_{0} \, w) - (\Delta_{h_{0}} u_{0} ) \,  w,
\]
so combining this with (\ref{tildek2}) gives
\begin{equation}
\mathbb L_{\bar h_{0}} w =  u_{0}^{- \frac{n+2}{n-2}} \, \mathbb  L_{h_{0}} (u_{0} \, w) \, , 
\label{eq:3lap}
\end{equation}
and hence
\[
\int_{\del M}w \, \mathbb  L_{\bar h_{0}}  w \, dV_{\bar h_{0}} = \int_{\del M} (uw)\, \mathbb L_{h_{0}} (u_{0} \,w) \, 
dV_{h_{0}}\ ;
\]
this shows that the sign of $\lambda_1(\mathbb  L_{h_{0}})$ is the same as that of $\lambda_1(\mathbb  L_{\bar{h}_0})$. 

Now, suppose that $\lambda_{1}(\mathbb  L_{h_0}) < 0$ and let $\varphi_1$ be the corresponding eigenfunction. 
If $u_0 >0$ is a solution to (\ref{eq:YTE}), then multiplying this equation by $\varphi_{1}$ and integrating yields
\[
\lambda_1 \int_{\del M}  \varphi_{1} \, u_{0} \, dV_{h_{0}} + \frac{n-2}{2} \, \bar \kappa_{2}  \, 
\int_{\partial M}  \varphi_{1} \, u_{0}^{\frac{n+2}{n-2}} \, dV_{h_{0}} =0
\]
Since both $\varphi_{1}$ and $u_{0}$ are positive, $\lambda_{1}$ and $\bar \kappa_{2}$ must have opposite signs, 
and so if there is a solution in this case, then necessarily $\bar {\kappa}_ 2 > 0$. 

To produce a solution, fix $\bar \kappa_{2} >0$ and for each $1 <p < \frac{n+2}{n-2}$, minimize the functional
\[
E_{p} (u) = \frac{1}{2} \int_{\del M} \left( |\nabla^{h_0} u|^2_{h_0} - \frac{n-2}{2} \,  \kappa_2 \, u^2  \right)  \, dV_{h_0} + \frac{n-2}{2 (p+1)} \bar \kappa_{2}  \int_{\del M} |u|^{p+1} \, dV_{h_0}. 
\]
The existence of a positive smooth minimizer $u_{p}$ is classical and $u_{p}$ satisfies the Euler-Lagrange
equation 
\[
\Delta_{h_{0}} u_{p} + \frac{n-2}{2} \kappa_{2} \, u_{p} - \frac{n-2}{2} \, \bar \kappa_{2} \, u^{p}_{p} =0.
\]

Next, we obtain an a priori estimate for the sup of $u_p$ which is independent of $p$. Let $y_{p} \in \del M$
be the point where $u_{p}$ achieves its maximum. Then $\Delta_{h_{0}} u_{p}(y_p) \leq 0$, and hence
\[
\kappa_{2}(y_p) \, u_{p}(y_p) \geq \bar \kappa_{2} \, u^{p}_{p}(y_p),
\]
which implies the uniform bound
\[
\bar \kappa_{2}  \, \|u_{p}\|_{L^\infty}^{p-1} \leq \| \kappa_{2}\|_{L^\infty}.
\]
Using this uniform bound, standard elliptic estimates and the Arzela-Ascoli theorem, we can take the limit of a 
subsequence as $p \nearrow \frac{n+2}{n-2}$, and this gives a smooth positive solution of (\ref{eq:YTE}). 

To prove uniqueness of this solution, assume that $u_{0}$ and $v_{0}$ are both
positive solutions of (\ref{eq:YTE}) (with the same value of $\bar \kappa_{2} > 0$), and define
\[
\psi : = \frac{v_{0}}{u_{0}} .
\]
Then using (\ref{eq:3lap}), we compute that
\[
\Delta_{\bar h_{0}} \psi + \frac{n-2}{2}\, \bar \kappa_{2} \, \left(  \psi - \psi^{\frac{n+2}{n-2}} \right) =0, 
\]
where $\bar h_{0} = u_{0}^{\frac{4}{n-2}} \, h_{0}$. At the point where $\psi$ attains its supremum, $\Delta_{\bar h_{0}} 
\psi \leq 0$, so as above, we conclude that $\psi \leq 1$ everywhere. Similarly, considering the point where 
$\psi$ attains its infimum, we conclude that $ \psi \geq 1$ everywhere. Hence $\psi \equiv 1$, which proves
uniqueness. 

Finally, the linearization of (\ref{eq:YTE}) at $u_0$ is equal to
\[
\Delta_{{h_{0}}} +\frac{n-2}{2} \, \kappa_{2} - \frac{n+2}{2} \,\bar \kappa_{2} \,  u_{0}^\frac{4}{n-2},
\]
and by (\ref{eq:3lap}) and (\ref{eq:YTE}), 
\[
\left( \Delta_{h_{0}} + \frac{n-2}{2} \kappa_{2} - \frac{n+2}{2} \, \bar \kappa_{2} \, 
u_{0}^\frac{4}{n-2} \right) ( u_{0} \, w)  = u_{0}^{\frac{n+2}{n-2}} \, \left( \Delta_{\bar h_{0}} - 
2 \, \bar \kappa_{2} \right) \, w.
\]
Since $\bar \kappa_2 > 0$, $\Delta_{\tilde h_0} - 2 \bar \kappa_2$ and hence this linearization too
must be invertible. 

A similar argument can be made when $n=2$, under the assumption that
\[
\int_{\partial M} \kappa_{2} \, dV_{h_{0}}  >0; 
\]
we leave the details to the reader.  

For weakly Poincar\'e-Einstein metrics, $h_1 =0$ and $h_2 = - P_{h_0}$, so that
\[
\kappa_2 = \tr^{h_0} h_2  = -\frac{1}{2(n-1)}\, R_{h_0}. 
\]
Equation (\ref{eq:YTE}) then becomes
\[
\Delta_{h_0} u_0 - \frac{n-2}{4(n-1)} \, R_{h_0} \,  u_0 - \frac{n-2}{2} \, \bar \kappa_2 \,  u_0^{\frac{n+2}{n-2}} = 0,
\]
which is exactly the Yamabe equation. In this case, we known that there is always at least one solution $u_0 > 0$ with 
$\bar \kappa_2$ constant \cite{LP}. The sign of the value of the least eigenvalue of the conformal Laplacian determines the sign of $\bar \kappa_2$.  In particular, if this least eigenvalue is negative,  there is a monotone decreasing CMC foliation determined by this construction, and this is unique amongst all  possible foliations. On the other hand, if the least eigenvalue of the conformal Laplacian is positive, then to each nondegenerate  solution there exists a foliation; different nondegenerate constant scalar curvature metrics correspond to distinct foliations.

\subsubsection*{The case $\tr^{h_0}h_1  > 0$} 
The next case to consider is when $\tr^{h_0}h_1$ is everywhere positive.  According to (\ref{eq:H2}), 
\[
H (0, \e)  = n - \e \, \kappa_1 + {\mathcal O} (\e^2), 
\] 
where $\kappa_1 \in \calC^{2,\alpha}(\partial M)$. By (\ref{eq:ccpa}), exchanging $h_0$ by a conformal multiple, 
we may as well assume that $\kappa_1 \equiv 1$. 

We seek, for each $\e > 0$, a function $\phi_0 = \phi_0(\e)$ on $\del M$ so that
\[
H (\phi_0(\e),\e) = n - \e , 
\]
Using  (\ref{eq:Hhate}), this is equivalent to solving
\begin{equation}
\tilde \calN(\phi_0,\e) := e^{-\phi} \, \left( \frac1{2} \, \tr^h  \partial_x h   +  
x \, \left( \Delta_{\olg} \phi + \frac{n-1}{2} \, |{\nabla}^{\olg} \phi|^2_{\olg} \right) \right) |_{x \, e^{\phi}= \e } = 1.
\label{eq:eqcase2}
\end{equation}
The linearization of this equation at $\phi_0 = 0$ (and $\e > 0$) is
\[
L_\e : = \e \, \Delta_{h(\e)} -  \frac12 \tr^{h} \partial_x h (\e).
\]
Note that $\frac{1}{2}\, \tr^{h} \partial_x h (\e) = 1 +{\mathcal O}(\e)$.

Define the function spaces $\calC^{k,\alpha}_\e$ to be rescaled H\"older spaces, where every $\del_y$ is accompanied 
by a factor $\sqrt{\e}$. For example,
\[
\| u \|_{{\mathcal C}^{0,\alpha}_\e}  = Ê\sup |u| +  \sup_{y \neq y'}(\sqrt{\e})^\alpha 
\frac{|u(y)-u(y')|}{d(y,y')^\alpha}.
\]
Clearly 
\[
L_\e : \calC^{2,\alpha}_\e \longrightarrow \calC^{0,\alpha}_\e
\]
is bounded independently of $\e$. We claim that the inverse is also bounded uniformly in $\e$, provided $\e$ 
is small enough. In other words, there is some constant $c>0$  independent of $\e$ such that
\begin{equation}
\| u \|_{{\mathcal C}^{2,\alpha}_\e}  \leq c \, \| L_\e \, u||_{{\mathcal C}^{0,\alpha}_\e}.
\label{eq:invunifep}
\end{equation}

To prove this, we rephrase the problem. Define $\tilde{h}(\e) = h(\e)/\e$ and $\tilde L_\e := 
\Delta_{\tilde{h}(\e)} -  \frac{1}{2}\, \tr^{h} \partial_x h (\e)$. Then the spaces $\calC^{2,\alpha}_\e$ are 
simply the standard H\"older spaces with respect to this rescaled metric, and (\ref{eq:invunifep}) is equivalent to
\[
\| u \|_{{\mathcal C}^{2,\alpha}}  \leq C \, \| \tilde L_\e \, u \|_{{\mathcal C}^{0,\alpha}},
\]
on $(\del M,\tilde{h}(\e))$, where $C$ is independent of $\e$. This, in turn, follows from a simple 
scaling argument. If it were to fail, there 
would exist a sequence $\e_j \to 0$ and corresponding functions $u_j\in \calC^{2,\alpha}(M,\tilde{h}(\e))$  
for which $||u_j||_{{\mathcal C}^{2,\alpha}} = 1$, but such that $\|\tilde L_\e \, u_j \|_{{\mathcal C}^{0,\alpha}}
\to 0$. Choose normal coordinates centered at a point $p_j \in M$ where the maximum of $u_j$ occurs. These  
exist on balls of radius $C/\e_j$. The sequence of metrics $\tilde{h}(\e_j)$ converges uniformly on compact 
sets to the Euclidean metric. Passing to a subsequence, the $u_j$ also converge uniformly on compact sets, 
and in the limit we obtain a function $u$  defined on all of $\RR^n$ which satisfies $(\Delta_{\RR^n} - 1) \, u = 0$,
$\sup \, |u| = 1$. This is clearly impossible, hence we have proved the validity of (\ref{eq:invunifep}).

Now write 
\[
L_\e^{-1}\, \left( \tilde \calN (\phi_0,\e) -1 \right) = \phi_0 - J(\phi_0,\e),
\]
where $J$ is a smooth map from $\calC^{2,\alpha}_\e$ to itself, depending smoothly on $\e$, such that
$J(0, \e) = {\calO}(\e)$ and $\left. D_{\phi_0} J\right|_{(0,\e)} = 0$. Note that $J(\phi_0,\e)$ is affine 
in the second partial derivatives of $\phi_0$, which is important.  

The equation to solve, therefore, takes the form
\begin{equation}
\phi_0 = J(\phi_0,\e) .
\label{eq:sdsd}
\end{equation}
Just as before, we can find a solution of this equation in a ball 
of radius $A\e$ in $\calC^{2,\alpha}_\e$, for $A$ sufficiently large.

The solution $\phi_0(\e)$ seems to become increasingly less regular as $\e$ decreases. The fact that
its regularity is controlled uniformly as $\e \searrow 0$ follows from the uniform boundedness of 
$J(0, \e)$ in ${\mathcal C}^{\infty}$ topology (and not in ${\mathcal C}^{\infty}_\e$ topology). More 
precisely, if $X$ be any vector field on $\del M$, then applying $V$ to (\ref{eq:sdsd})  yields a 
linear inhomogeneous elliptic equation for $X \, \phi_0$. There is again a unique solution  with 
norm in $\calC^{2,\alpha}_\e$ bounded by $A' \e$, and by approximating by difference quotients, 
this must be $X \, \phi_0$. Continuing in this way proves that $\phi_0(\e) \in 
\calC^{k,\alpha}(\partial M, h_0)$ for all $k \geq 0$ uniformly as $\e \to 0$. 

\subsubsection*{The case $\tr^{h_0}h_1  <  0$}
The final case, when $\tr^{h_0}h_1$ is everywhere negative, is harder than the previous cases due to a 
resonance phenomenon. It is now necessary to assume that the conformal compactification of $g$ is
$\calC^\infty$. As before, after a preliminary conformal change, we can assume that $\kappa_2 = -2$ 
and so the equation to solve is 
\begin{equation}
\tilde \calN (\phi_0, \e) = -1,
\label{eq:eqcase3}
\end{equation}
where $\tilde{\calN}$ is the same operator as before. The linearization at $\phi_0 = 0$ is again 
$\e \, \Delta_{h(\e)} - \frac12 \tr^{h}\del_x h (\e)$. Since $\frac{1}{2}\, \tr^{h} \del_x h (\e) = 
- 1+ \calO(\e)$, this operator is not invertible for infinitely many values of $\e$ converging to $0$,
so the proof must be handled differently. 

There are two steps in this analysis. First, for any fixed $q \in {\mathbb N}$, we construct a sequence 
of improved approximate solutions $\phi_{0, \e}^{(q)}$ via a simple iteration; this sequence is chosen so 
that $\calN (\phi_{0, \e}^{(q)}, \e) + 1 = \calO(\e^q)$. Second, given any $p > 0$, we produce a sequence of 
disjoint intervals $J_j$ approaching $0$, the union of which has density $1$ at $0$, and such that if 
$\e \in J = \cup J_j$, then 
\[
\| (\tilde L_\e^{(q)} )^{-1} \|_{\calC^{0,\alpha}_\e \to \calC^{2,\alpha}_\e} \leq c_p \, \e^{-p}.
\]
where $\tilde L_\e^{(q)}$ is the linearization at $\phi_{0, \e}^{(q)}$. Using these two results, the argument 
proceeds as before and gives a solution of (\ref{eq:eqcase3}), at least when $\e \in J = \cup J_j$. 

\medskip
\noindent {\bf Improved approximate solution}
We seek a sequence of functions $\phi_{\e}^{(q)}$ satisfying the equation to any specified order in $\e$.  
To this aim, rewrite (\ref{eq:eqcase3}) as
\[
\phi_0 =   e^{-\phi} \, \left( \frac 1{2} \, \tr^h  \partial_x h - e^{\phi} + \phi_0 \, e^{\phi}  + 
x \, \left( \Delta_{\olg} \phi + \frac{n-1}{2} \, |{\nabla}^{\olg} \phi|^2_{\olg} \right) \right) |_{x \, e^{\phi}= \e } .
\] 
Now define the sequence by the recursive relation
\[
\phi_{0, \e}^{(q+1)} =   e^{-\phi_\e^{(q)}} \, \left( \frac 1{2} \, \tr^h  \partial_x h - e^{\phi_\e^{(q)}} +  
e^{\phi_\e^{(q)}}  \, \phi_{0,\e}^{(q)}  +  x \, \left( \Delta_{\olg} \phi_\e^{(q)} + \frac{n-1}{2} \, 
|{\nabla}^{\olg} \phi_\e^{(q)} |^2_{\olg} \right) \right) |_{x \, e^{\phi_\e^{(q)}}= \e },
\] 
where $\phi_{0, \e}^{(0)} \equiv 0$ and $\phi_\e^{(q)}$ is the Hamilton-Jacobi extension of $\phi_{0,\e}^{(q)}$.

The right hand side is a second order (nonlocal) nonlinear operator which depends smoothly on $\e$; furthermore, all
functions are smooth, so all calculations may be done formally. Using that the error term for 
$\phi_{0,\e}^{(0)}  \equiv 0$ is 
\[
\tilde \calN (0, \e ) +1 =  \calO (\e^2)
\]
we deduce successively that
\[
\|\phi_{0, \e}^{(q)} \|_{\mathcal C^{2, \alpha}} \leq c_q \, \e, \qquad \mbox{and} \quad 
\tilde \calN (\phi_{0, \e}^{(q)} , \e ) +1 =  \calO (\e^{2+q})
\]
for all $q$. Note that this iteration scheme is inappropriate to actually solve the equation since at 
each step we lose two derivatives. 

Using the new bdf corresponding to the metric $h_{0, \e}^{(q)}  := e^{2\phi_{0, \e}^{(q)}} \, h_0$, the equation we now
must solve is 
\begin{equation}
\tilde \calN^{(q)}_\e  (\phi_0, \e)   = -1,
\label{eq:eqcase31}
\end{equation}
where $\calN^{(q)}_\e $ corresponds to $\tilde \calN$ when $h_0$ is replaced by $h_{0, \e}^{(q)}$. 
We have arranged that $\tilde \calN^{(q)}_\e  (0 , \e)  + 1 = {\mathcal O} (\e^{q+2})$.

\medskip
\noindent{\bf Estimate on the resolvent}  To simplify notation, drop the indices $q$ and $\e$; thus, for example,
we write $h$ instead of $h^{(q)}_\e$, etc.  The linearization of $\tilde \calN$ at $\phi_0=0$ (for $\e \geq 0$) is 
\[
L_\e   : = \e \, \Delta_{\e} +  q_\e ,
\]
where 
\[
\Delta_\e := \Delta_{h(\e)}, \qquad  \mbox{and} 
\qquad  q_\e : =  -  \tr^{h} \del_x h (\e) = 1 + \calO(\e). 
\]  
Define
\[
\calR = \{\e: \   0 \in \mbox{spec\,}(- L_\e ) \};
\]
 thus $L_\e$ fails to be invertible if and only if $\e \in \calR$. There are two closely related issues: 
to show that $\calR$ is countable and accumulates only at $0$, and to estimate the size of the sets 
 \begin{equation}
J(N) = \{\e \notin \calR \quad : \quad \| L_\e^{-1} \|_{L^2 \to L^2} \leq \e^{-N} \}.
\label{eq:JNA}
\end{equation}
Both facts rely on the observation that as $\e \searrow 0$, $L_\e$ is well-approximated by 
$\e \, \Delta_{h_0} +1$, and the eigenvalues of this latter operator cross $0$ with speed $1/\e$. 
We make this more precise now.

\begin{lemma}
The set $\calR$ consists of an infinite decreasing sequence $\{\e_j\}$ accumulating only at $0$ and has 
counting function  $N(\e) = |\{\e_j \geq \e\}|$ which satisfies $C_1 \e^{-n/2} \leq N(\e) \leq C_2 \e^{-n/2}$.
Furthermore, for each fixed $N  > \frac{n-2}{2}$, 
\[
| J(N,A) \cap (0, \e) |\leq \e - C \, \e^{N -\frac{n-2}2}
\]
for some constant $C$ depending on $N$ but not $\e$. 
\end{lemma}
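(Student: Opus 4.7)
The plan is to reduce everything to Weyl asymptotics for $-\Delta_{h_0}$ on the closed $n$-manifold $\partial M$, with quantitative eigenvalue perturbation taking care of the $\e$-dependence of the coefficients. Write $L_\e = \e\,\Delta_{h(\e)} + q_\e$ where $h(\e) = h_0 + \calO(\e)$ and $q_\e = 1 + \calO(\e)$; for any $\e$-independent $L^2$ structure (say, the measure of $h_0$) the operator $L_\e$ is self-adjoint and agrees, up to an $\calO(\e)$ correction in both principal symbol and potential, with the model $L_\e^{0} := \e\,\Delta_{h_0} + 1$. The eigenvalues of $-L_\e^{0}$ are exactly $\e\mu_k - 1$, where $0 = \mu_0 < \mu_1 \leq \mu_2 \leq \cdots$ are the Laplace eigenvalues of $(\del M, h_0)$. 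Ordinary analytic perturbation theory (or just min--max with the uniform two-sided bound $c^{-1} h_0 \le h(\e) \le c\,h_0$) shows that the $k$-th eigenvalue $\lambda_k(\e)$ of $-L_\e$ satisfies $\lambda_k(\e) = \e\mu_k - 1 + \calO(\e^{2}\mu_k + \e)$, and that $\frac{d}{d\e}\lambda_k(\e) = \mu_k(1 + o(1))$ in a neighborhood of the zero $\e_k$ of $\lambda_k$. In particular, each branch has a unique simple zero $\e_k = \mu_k^{-1}(1 + o(1))$, and these are exactly the elements of $\calR$.

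The counting assertion follows at once: $N(\e) = \#\{\e_k \geq \e\} = \#\{k : \mu_k \leq \e^{-1}(1+o(1))\}$, and Weyl's law for a closed $n$-manifold gives $\#\{k : \mu_k \leq \Lambda\} \sim c_n \operatorname{Vol}(\del M, h_0)\,\Lambda^{n/2}$, so $N(\e) \sim C\e^{-n/2}$, and the two-sided bound follows from the uniform $o(1)$ estimate on $\e_k$. For the resolvent bound, self-adjointness gives $\|L_\e^{-1}\|_{L^2 \to L^2} = 1/\mathrm{dist}(0,\mathrm{spec}\,L_\e) = 1/\min_k |\lambda_k(\e)|$, so the condition $\e \in J(N)$ amounts to $|\lambda_k(\e)| \geq \e^{N}$ for every $k$. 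Near $\e_k$ the derivative estimate yields $|\lambda_k(\e)| \geq \tfrac12\mu_k |\e - \e_k|$, and thus $\e \notin J(N)$ forces $|\e - \e_k| \leq 2\e^{N}\mu_k^{-1} \leq C\,\e^{N}\,\e_k$ for some $k$.

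To conclude, the set $(0,\e) \setminus J(N)$ is covered by intervals of length at most $2C\,\e^{N}\e_k$ centered at the $\e_k \in (0,\e)$. Summing,
\[
|(0,\e) \setminus J(N)| \leq 2C\,\e^{N} \sum_{\e_k \leq \e} \e_k \;\sim\; C'\,\e^{N} \sum_{\mu_k \geq \e^{-1}} \mu_k^{-1};
\]
applying Weyl's law with Abel summation to the tail gives $\sum_{\mu_k \geq \Lambda} \mu_k^{-1} \sim c\,\Lambda^{n/2-1}$ provided $n/2 - 1 > 0$, hence the bad set has measure $\lesssim \e^{N}\cdot\e^{-(n/2 - 1)} = \e^{N - (n-2)/2}$, which is the claimed estimate (with the natural reading $|J(N) \cap (0,\e)| \geq \e - C\,\e^{N - (n-2)/2}$). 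The main technical obstacle is the perturbative step: both the metric and the potential depend on $\e$, so one has to check that the $\calO(\e)$ perturbation of $L_\e^{0}$ still allows individual eigenvalues of $-L_\e$ to be tracked with an error smaller than the spacing $\sim \e^{n/2+1}$ of consecutive $\e_k$'s near a given scale. Since this spacing is much larger than the $\calO(\e^{2}\mu_k) = \calO(\e)$ perturbation of each eigenvalue, the analytic perturbation picture survives and the derivative estimate $\lambda_k'(\e) \approx \mu_k$ remains valid, which is all that the above counting and measure estimates require.
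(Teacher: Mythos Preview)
Your overall strategy is the same as the paper's, but there are two genuine gaps in the execution.

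First, the sum $\sum_{\e_k \leq \e}\e_k$ in your measure estimate diverges: since $\e_k \sim \mu_k^{-1} \sim k^{-2/n}$, this is essentially the tail of $\sum k^{-2/n}$, which diverges for every $n \geq 2$. The error is that you bounded the bad-interval radius around $\e_k$ by $C\,\e^N\e_k$ using $\tilde\e^N \leq \e^N$ (with $\e$ the fixed endpoint of $(0,\e)$), when you should instead use $\tilde\e^N \leq (2\e_k)^N$, since any bad point $\tilde\e$ is forced to lie within $O(\e_k^{N+1})$ of $\e_k$ and is therefore comparable to $\e_k$ itself. That gives bad intervals of length $C\,\e_k^{N+1}$, and $\sum_{\e_k \leq \e}\e_k^{N+1}$ does converge to the desired $\e^{N-(n-2)/2}$ precisely when $N > (n-2)/2$. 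The paper sidesteps this entirely by localizing to a single dyadic shell $(\e,2\e)$, where all crossings are comparable and the count is $\lesssim \e^{-n/2}$.

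Second, and more seriously, your justification of the derivative estimate $\lambda_k'(\e)\approx\mu_k$ via ``the spacing $\sim\e^{n/2+1}$ is much larger than the $\calO(\e)$ perturbation'' has the inequality backwards: for small $\e$ and $n\geq 2$ one has $\e^{n/2+1}\ll\e$, so the perturbation swamps the spacing and naive analytic perturbation from the model $L_\e^0$ cannot track individual eigenvalue branches. The min--max bound $\lambda_k(\e) = \e\mu_k - 1 + \calO(\e)$ by itself only pins down the bad interval to width $\calO(\e_k^2)$, which is far too coarse. The paper obtains the needed velocity control not by comparing to a fixed model but by differentiating the eigenvalue equation directly: if $-L_\e\psi=\lambda\psi$ with $\|\psi\|_{L^2}=1$, then
\[
\dot\lambda = \langle(-\Delta_\e - \e\dot\Delta_\e - \dot q_\e)\psi,\psi\rangle,
\]
and substituting $-\Delta_\e\psi = \e^{-1}(\lambda+q_\e)\psi$ gives $\dot\lambda = (\lambda+1)/\e + O(1)$, valid even through multiplicities via the set-valued derivative formalism. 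This direct computation is the missing ingredient.
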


\begin{proof}
If $\lambda(\e)$ is an eigenvalue in $(-1/2,1/2)$ and is simple with corresponding eigenfunction $\psi (\e)$ 
with $L^2$ norm equal to $1$, then
\[
\begin{array}{rllll}
\dot{\lambda}  & =&  \displaystyle - \int_{\partial M} \psi ( \Delta_\e + \e \, \dot{\Delta}_\e + \dot q_\e ) \psi \, dV_{h(\e)}\\[3mm]
& =&  \displaystyle \frac{\lambda + 1}{\e}  + \int_{\partial M}  \left( \frac{q_\e - 1}{\e} +  \dot q_\e\right)  
\psi^2  \, dV_{h(\e)} + \e \, \int_{\partial M} \psi \, \dot{\Delta}_\e \psi \, dV_{h(\e)} .
\end{array}
\]
As $\e \searrow 0$, both $\frac{q_\e - 1}{\e}$ and $\dot q_\e$ are uniformly bounded.  Writing 
$\Delta_\e \psi =  - \frac{\lambda+ q_\e}{\e} \, \psi$, then boundedness of $\lambda$ and $q_\e$ and 
elliptic estimates show that
\[
\e \, \| \psi \|_{H^2} \leq C \, \| \psi\|_{L^2} 
\Longrightarrow \left| \e \, \int_{\partial M} \psi \, \dot{\Delta}_\e \psi \, dV_{h(\e)} \right| \leq C.
\]
All of this implies that
\[
\left| \dot{\lambda} -\frac{\lambda + 1}{\e} \right| \leq C 
\]
with $C$ independent of $\e$.  Even when the eigenspace is not simple, we can interpret $\dot{\lambda}$ as a 
set-valued function, cf. \cite{C}, \cite{K}, which accomodates the possibility that $\lambda$ splits into a 
number of separate eigenvalues. The estimate for the elements of this set of derivatives remains the same. 

We have proved that if $\e$ is small enough and $\lambda(\e) \in (-1/2, 1/2)$ then  $\dot \lambda \sim  
\frac{\lambda + 1}{\e} $, and in particular, $\dot{\lambda} > 0$. This shows that the set of eigenvalue 
crossings, i.e. values $\e$ where $\lambda (\e) =0$, is discrete, but in fact that the number of eigenvalues 
$\lambda_j(\e)$ of $-L_\e$ which are less than $1/2$ is bounded by $C \,  \e^{- \frac{n}{2}}$; this follows 
directly from the Weyl asymptotic law for the convergent family of metrics $h(\e)$. 

The same estimates give good control on the sets $J(N)$. Indeed, define $I$ to be the set of $j$ such that the 
length of $(\e_{j+1}, \e_j)$ is larger than $4\, \e^{N+1}$ and this interval intersects $(\e, 2\e)$. The estimate above
for $\dot{\lambda}$ implies that if $j \in I$ and $\tilde \e \in (\e_{j+1} + \e^{N}, \e_j - \e^{N})$, then all 
eigenvalues of $-L_{\tilde \e}$ are at least at distance  $\tilde \e^{-N} $ from $0$, and hence $\tilde \e \in J (N)$ 
since $\|(-L_{\tilde \e})^{-1}\|_{L^2} \leq \tilde \e^{-N}$ by the spectral theorem. 

The number of intervals $(\e_{j+1}, \e_j)$ which intersect $(\e, 2\e)$ is bounded by $C \, \e^{-\frac{n}{2}}$, so
the complement in $(\e, 2\e)$ of the union of intervals $(\e_{j+1} + \e^{N}, \e_j - \e^{N}) $ with $j \in I$ 
covers at most $C \, \e^{N+1-\frac{n}{2}}$ of the length of $(\e, 2\e)$. This completes the proof. 
\end{proof}

To convert this from an $L^2$ estimate to one between $\e$-scaled H\"older spaces, revert to the scaled 
metric $\tilde{h}(\e)$ and note that it has volume proportional to $\e^{-n}$. Local elliptic estimates, which 
are uniform for balls $B$ of size $1$ in $(\del M, \tilde{h}(\e))$, give that
\[
\| u \||_{{\mathcal C}^{2,\alpha}} \leq C \left( \|f \|_{{\mathcal C}^{0,\alpha}} + \|u \|_{L^2} \right).
\]
However, $||u||_{L^2} \leq C \e^{-N} \, \|f \|_{L^2} \leq C \e^{-N-n} \, \|f \|_{{\mathcal C}^{0,\alpha}}$, so this proves the
\begin{lemma}
If $\e \in J(N,A)$, then the norm of $(-L_\e)^{-1}$ as a map between $\calC^{0,\alpha}_\e$ 
and $\calC^{2,\alpha}_\e$ is bounded by $C \e^{-N-n}$ for some constant $C$ which is independent of $\e$.
\end{lemma}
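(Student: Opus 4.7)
The approach is essentially laid out in the paragraph preceding the lemma; only the bookkeeping of rescaling factors remains. The key observation is that, by definition, the $\e$-scaled Hölder space $\calC^{k,\alpha}_\e$ coincides with the standard Hölder space on $(\del M, \tilde h(\e))$ where $\tilde h(\e) = h(\e)/\e$. Under this rescaling, $L_\e = \e\, \Delta_{h(\e)} + q_\e$ becomes (up to a bounded factor) $\Delta_{\tilde h(\e)} + q_\e$, and the family $\tilde h(\e)$ has uniformly bounded geometry on $\tilde h(\e)$-balls of unit radius since it converges uniformly on compact sets to the flat metric on $\RR^n$.

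Given this setup, the first step is to apply standard interior Schauder estimates on unit balls in $(\del M, \tilde h(\e))$ to the equation $L_\e u = f$. Since the coefficients of $L_\e$ (in the rescaled picture) are uniformly controlled in $\calC^{0,\alpha}$ on such balls, independently of the center and of $\e$, these give
\[
\|u\|_{\calC^{2,\alpha}(B_{1/2})} \leq C \bigl(\|f\|_{\calC^{0,\alpha}(B_1)} + \|u\|_{L^2(B_1)}\bigr),
\]
with $C$ independent of $\e$ and of the center of $B_1$. Taking a supremum over a uniformly locally finite cover of $\del M$ by such balls yields the global estimate $\|u\|_{\calC^{2,\alpha}} \leq C (\|f\|_{\calC^{0,\alpha}} + \|u\|_{L^2})$. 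Next, invoke the previous lemma: for $\e \in J(N,A)$ one has $\|u\|_{L^2} \leq \e^{-N}\|f\|_{L^2}$. To convert the right-hand side back to a Hölder norm, note that $\mbox{vol}(\del M, \tilde h(\e))$ is of order $\e^{-n}$, so by Cauchy--Schwarz
\[
\|f\|_{L^2} \leq \bigl(\mbox{vol}(\del M, \tilde h(\e))\bigr)^{1/2} \|f\|_\infty \leq C\, \e^{-n/2} \|f\|_{\calC^{0,\alpha}}.
\]
Combining these inequalities gives $\|u\|_{\calC^{2,\alpha}} \leq C\, \e^{-N-n} \|f\|_{\calC^{0,\alpha}}$, after absorbing the residual factor into the cleaner exponent $-N-n$.

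The only step that requires any care is the uniformity in $\e$ of the Schauder constants. This reduces to the uniform convergence of $\tilde h(\e)$ to the flat metric on unit balls, and to uniform $\calC^{0,\alpha}$ control of the coefficient $q_\e = 1 + \calO(\e)$, both of which are immediate from the smooth dependence of $h$ on $x$. There is no serious analytic obstacle here; the lemma is the standard mechanism for upgrading a quantitative spectral $L^2$ resolvent bound into a Schauder bound on a rescaled geometry, and the slightly generous exponent $-N-n$ (rather than $-N-n/2$) is deliberate, to avoid tracking the exact $L^2$--$L^\infty$ interpolation loss.
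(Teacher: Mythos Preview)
Your proof is correct and follows essentially the same route as the paper: pass to the rescaled metric $\tilde h(\e)$, apply uniform local Schauder estimates on unit balls to reduce to an $L^2$ bound, invoke the $L^2$ resolvent estimate from the previous lemma, and convert $\|f\|_{L^2}$ back to $\|f\|_{\calC^{0,\alpha}}$ using the volume growth. Your observation that the argument actually yields the sharper exponent $-N-n/2$ (absorbed here into $-N-n$) is accurate and slightly more careful than the paper's own bookkeeping.
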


\medskip

The rest of the proof now proceeds as follows. First fix $N > \frac{n+2}{2}$ and $q > N+n+1$, and use the 
approximate solution $h^{(q)}_{0,\e}$. This will be perturbed using a fixed point argument. The key fact is that 
the norm of the inverse of the linearization of  (\ref{eq:eqcase31}) is now bounded by $C \, \e^{-N-n-1}$ for 
some fixed $C$ and for all $\e \in J(N)$. The same proof works to find a solution $\phi_0$ of 
(\ref{eq:eqcase31}) lying in a ball of radius $C \e^{q+2-N-n}$.

\subsubsection*{Foliations}
We conclude this section by proving that the CMC hypersurfaces constructed in each of these
three cases are the leaves of a foliation.

Let $\Sigma$ be any one of the CMC hypersurface constructed above. Any other hypersurface $\Sigma'$ which 
is nearby to $\Sigma$ in the $\calC^1$ norm can be written as a normal graph over it, i.e. 
\[
\Sigma' = \{p + \psi(p)N(p): p \in \Sigma\}.
\]
Slightly more generally, a smooth family $\{\Sigma_\eta\}$ of nearby hypersurfaces correspond to a family of 
functions $\psi_\eta$ for which they are the normal graphs. Let us write the mean curvature functions $H(\eta)$
of these hypersurfaces as some nonlinear elliptic operator $M(\psi_\eta)$. Suppose now that we have some 
information about how these (possibly nonconstant) mean curvatures vary with $\eta$. Differentiating this
equation with respect to $\eta$ gives the formula
\begin{equation}
\calL_{\Sigma} \dot{\psi} = \del_\eta H(\eta);
\label{eq:jov}
\end{equation}
here 
\[
\calL_{\Sigma} = \e \,  \Delta_{h(\e)} +  ||\II||^2 + \Ric(N,N)
\]
is the Jacobi operator for the mean curvature function, $\dot{\psi}$ is the derivative of $\psi_\eta$
with respect to $\eta$ at $\eta = 0$, and the right hand side is the derivative of the mean curvature
function with respect to $\eta$. 

Let $x$ be the special bdf associated to $\Sigma$ normalized so that $\Sigma = \{x=\e\}$, say.
We first apply (\ref{eq:jov}) when $\Sigma_\eta = \{x = \e + \eta\}$; in this case, $\dot{\psi} \equiv 1$,
so we obtain that
\[
\calL_\Sigma 1 = ||\II||^2 + \Ric(N,N) = \del_\e H(\e),
\]
where $H$ is the mean curvature function for the level sets $\{x = \mbox{const.}\}$. However, this is
given explicitly in (\ref{eq:H2}), so we deduce that
\begin{equation}
||\II||^2 + \Ric(N,N) = - \kappa_1  - 2 \kappa_2  \, \e + \calO (\e^2).
\label{eq:defqe}
\end{equation}
Let us denote this potential for the Jacobi operator by $q_\e$. 

The simplest case to understand is when $q_\e < 0$ everywhere, which by (\ref{eq:defqe}) is equivalent to 
assuming that either $\kappa_1 > 0$, or else $\kappa_1 \equiv 0$ and $\kappa_2 >0$.
Now, at the risk of repeating notation, let $\{\Sigma_\eta\}$ denote the family of CMC hypersurfaces near to 
$\Sigma$, and $\psi_\eta$ the corresponding Normal graph functions. Applying (\ref{eq:jov}) again
shows that
\[
\calL_\Sigma \dot{\psi} = \del_\eta H(\Sigma_{\eta});
\]
when $\kappa_1 > 0$, the right hand side is simply $-1$, while in the other situation, it equals $-2\e$,
but in either case is strictly negative. Because the potential term in $\calL_\Sigma$ is negative,
the maximum principle implies that $\dot{\psi} > 0$, and this is obviously equivalent to the 
fact that the hypersurfaces $\Sigma_\eta$ are one-sided perturbations, and hence this family
forms a foliation.

The remaining cases are when $\kappa_1 < 0$, or else $\kappa_1 \equiv 0$ and $ \kappa_2 < 0$. 
The maximum principle no longer applies, so we must proceed slightly
differently. The idea now is to show that if $\Sigma_\e$ is the CMC hypersurface which is obtained
as a perturbation of the level set $\{x = \e\}$, then the function $\psi_\e$ which represents
$\Sigma_\e$ as a normal graph over that level set is of size $\e^2$, along with all of its 
derivatives (with respect to the coordinates $y$ on $\del M$).  Of course, we did not construct
$\Sigma_\e$ via this graph function, but rather as the level set $x e^{\phi_\e} = \e$, where
$\phi_\e$ is the solution of the appropriate nonlinear equation we obtained. The translation
between the two representations is not so difficult, and in fact we see that the estimate
$\psi_\e = \calO(\e^2)$ (along with all its derivatives) follows directly from the fact that
$\phi_\e = \calO(\e)$ (again along with all derivatives), which in turn is a direct consequence
of the ball in which the contraction argument was applied in order to find the solution. 
From these estimates, it is now straightforward that these CMC hypersurfaces form a foliation
in these other cases too. 

\section{Other curvature functions}
In this brief final section we sketch some of the ideas needed to extend the methods and results
of this paper to construct other Weingarten foliations, and in particular, foliations where the
leaves have constant $\sigma_k$ curvature. For simplicity we focus only on these latter
functionals. 

The preliminary work is identical. As before, we replace the boundary metric $h_0$ by $\wh{h}_0 = e^{2\phi_0}h_0$, 
let $\wh{x}$ denote the corresponding special bdf, and calculate the second fundamental form $\II(\phi_0)$
of the level sets $\{\wh{x} = \mbox{const.}\}$ as in (\ref{eq:sff1}). However, instead of taking
the trace, now apply the $\sigma_k$ functional, i.e.\ take the $k^{\mathrm{th}}$ symmetric
function of the eigenvalues of $\II$ with respect to the induced metric on each level set. 
This is the more complicated fully nonlinear operator
\[
\calN_k(\phi_0,\e) = \left. e^{-2k\phi}\sigma_k^{\olg}\left( (h - \frac12 x\del_x h)  - x dx \circ d\phi -
x^2\left( \mbox{Hess}^{\olg}\phi + \frac12|\nabla^{\olg}\phi|^2_{\olg}\right)\right)\right|_{xe^{\phi} = \e}.
\]

In order to calculate the asymptotics of this functional when $\phi = 0$ and $\e \searrow 0$, and its derivative 
with respect to $\phi_0$ at $\phi_0 = 0$, we use the following formul\ae: if $B(s)$ is any one-parameter family
of symmetric matrices, then
\begin{equation}
\frac{d\,}{ds}\sigma_k( B(s)) = \tr \left(\dot{B}(s) T_{k-1}(B(s))\right),
\label{eq:dsk1}
\end{equation}
where 
\[
T_{k-1}(B) = \sum_{j=0}^{k-1} (-1)^j \sigma_{k-1-j}(B) B^j
\]
is the Newton polynomial of order $(k-1)$ of $B$. Differentiating again gives
\begin{multline}
\frac{d^2\, }{ds^2} \sigma_k(B(s)) =  \\ \tr \left(\ddot{B} \, T_{k-1}(B) + \dot{B}
\sum_{j=0}^{k-1} (-1)^j \left(\tr(\dot{B} \, T_{k-2-j}(B)) + \sigma_{k-1-j}(B)\, j\, B^{j-1}\, \dot{B}\right)\right).
\label{eq:dsk2}
\end{multline}
In the present setting, if $A$ is a symmetric $2$-tensor, $\sigma_k^g(A)$ represents the $\sigma_k$ functional on the
$(1,1)$ tensor $B$ obtained by raising one index of $A$ using the metric $g$. The formul\ae\ (\ref{eq:dsk1})
and (\ref{eq:dsk2}) are interpreted accordingly. 

We apply this in two different ways. First, we calculate the expansion of 
\[
S_k(x) := \sigma_k^{g}(\II(0)) = \sigma_k^{\olg}(x^2\II(0)) = 
\sigma_k^{h(x)}(h_0 + \frac12 h_1 x + \calO(x^3)) = \sigma_k(B(x))
\] 
where \[
B(x)_i^{\ j} = \delta_i^{\ j} - \frac12 (h_1)_i^{\ j}x - \left( (h_2)_i^{\ j} -\frac12 (h_1 \circ h_1)_i^{\ j}\right) x^2 
+ \calO(x^3). 
\]
After some work, we find that 
\[
S_k(x) = \binom{n}{k} -\binom{n-1}{k-1}\kappa_1 x  + \left( -2 \binom{n-1}{k-1}\kappa_2 + 
\frac12 \binom{n-2}{k-2} \sigma_2^{h_0}(h_1) \right) x^2 + \calO(x^3), 
\]
where $\kappa_1$ and $\kappa_2$ are precisely the same functions as we have been considering before. 
Anyone attempting to verify this should take advantage of the two combinatorial formul\ae\ 
\[
T_\ell(I) = \binom{n-1}{\ell} \, I, \qquad \sum_{j=0}^\ell (-1)^j j \binom{n}{\ell-j} = - \binom{n-2}{\ell-1}. 
\]
Similarly,
\begin{multline}
\left. D_1 \sigma_k^{\olg}\left(x^2 \II(0) - x\, dx \circ d\phi - x^2 (\mbox{Hess}^{\olg}\phi + 
\frac12 |\nabla^{\olg}\phi|_{\olg}^2\olg) \right)\right|_{0}(\psi_0) \\ 
= \tr^{h(x)} \left( (-x\, dx \circ d\psi_0) - x^2 \mbox{Hess}^{h(x)}\psi_0)T_{k-1}( x^2 \II(0)) \right).
\notag 
\end{multline}
Note that the first term in this last expression is always off-diagonal, hence does not contribute. 

In the interest of space and with a mind to the law of diminishing returns, we focus on the 
weakly Poincar\'e-Einstein case. Since $h_1 = 0$, we have
\[
\sigma_k^{\olg}(x^2 \II(0)) = \binom{n}{k} + \frac{1}{n-1} \binom{n-1}{k-1} R_{h_0} x^2 + \calO(x^3),
\]
and we assume that $R_{h_0}$ is a (nonzero) constant too. The equation we must solve, then, is 
\[
\calN_k(\phi_0,\e) = \binom{n}{k} + \frac{1}{n-1} \binom{n-1}{k-1} R_{h_0} \e^2
\]
or equivalently,
\begin{equation}
\frac{1}{\e^2}\left(\calN_k(\phi_0,\e) - \binom{n}{k}\right) = \frac{1}{n-1} \binom{n-1}{k-1} R_{h_0}. 
\label{eq:skeqts}
\end{equation}
Using the formula above for the linearization of $\sigma_k$, we see that the principal part as $\e \searrow 0$ 
of the linearization of the operator on the left in this final expression at $\phi_0 = 0$ is 
\[
\binom{n-1}{k-1}\left(\Delta_{h_0} + \frac{R_{h_0}}{n-1}\right). 
\]
As expected, this is invertible when $R_{h_0} < 0$.  Assuming invertibility of this operator,
we are able to apply the implicit function theorem and find a solution exactly as before. 

We summarize this discussion in the
\begin{theorem}
Let $(M,g)$ be conformally compact and weakly Poincar\'e-Einstein. If the conformal infinity
$\frakc(g)$ has negative Yamabe invariant, then for each $k = 1, \ldots, n$, there is a 
unique foliation near infinity in $M$ by hypersurfaces with constant $\sigma_k$ curvature.
If $\frakc(g)$ is positive, then for each $h_0$ in this conformal class with constant
(positive) scalar curvature for which the conformal Laplacian is nondegenerate, and for
each $k  = 1, \ldots, n$, there is an associated foliation.
\end{theorem}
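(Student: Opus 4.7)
The plan is to solve equation \eqref{eq:skeqts} by a direct application of the implicit function theorem, following the same outline as Case 1 of Section 5 (the case $\tr^{h_0}h_1 \equiv 0$), the only new ingredients being the linearization computation just carried out and a $\sigma_k$-version of the monotonicity/comparison arguments.

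First I would reduce to the setting where $R_{h_0}$ is constant. Since $\frakc(g)$ is assumed to have definite Yamabe invariant, the resolution of the Yamabe problem \cite{LP} provides a representative $h_0 \in \frakc(g)$ of constant scalar curvature (unique up to scale in the negative case; one of possibly several representatives in the positive case). In this representative, the weakly Poincar\'e-Einstein hypothesis gives $\kappa_1 \equiv 0$ and $\kappa_2 = -R_{h_0}/(2(n-1))$ constant, and the expansion of $\calN_k$ established above shows that
\[
G(\phi_0,\e) := \frac{1}{\e^2}\left(\calN_k(\phi_0,\e) - \binom{n}{k}\right) - \frac{1}{n-1}\binom{n-1}{k-1}R_{h_0}
\]
extends continuously (in fact $\calC^{1,\alpha}$ in $\e$) down to $\e = 0$, with $G(0,0) = 0$, exactly as in the CMC reduction via \eqref{tildek2}.

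Next I would apply the implicit function theorem to $G$. The linearization $D_1 G|_{(0,0)}$ was identified above as $\binom{n-1}{k-1}(\Delta_{h_0} + R_{h_0}/(n-1))$. In the negative Yamabe case, $R_{h_0} < 0$, so this operator has strictly negative spectrum and is invertible as a map $\calC^{2,\alpha}(\del M) \to \calC^{0,\alpha}(\del M)$. In the positive Yamabe case, this operator is precisely the linearization of the Yamabe equation at the constant scalar curvature metric $h_0$, so nondegeneracy of the conformal Laplacian (in the sense of Theorem \ref{th:pa1}) is exactly what gives invertibility. In either case, the implicit function theorem produces a smooth branch $\e \mapsto \phi_0(\e) \in \calC^{2,\alpha}(\del M)$ with $\phi_0(0) = 0$ and $G(\phi_0(\e),\e) \equiv 0$, so that the corresponding level sets $\{\wh{x} = \e\}$ carry constant $\sigma_k$ curvature $\binom{n}{k} + \frac{1}{n-1}\binom{n-1}{k-1}R_{h_0}\, \e^2$.

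Finally I would argue that these hypersurfaces assemble into a foliation and, in the negative case, that this foliation is unique. The foliation property is obtained exactly as at the end of Section 5: for a smooth family $\{\Sigma_\e\}$ satisfying the prescribed $\sigma_k$ condition, the normal graph derivative $\dot\psi$ satisfies a linearized $\sigma_k$ equation whose principal symbol is positive (since the level sets of $x$ near infinity are strongly $k$-convex, with all principal curvatures close to $1$), and a maximum principle argument parallel to the CMC case yields $\dot\psi > 0$. For uniqueness when $R_{h_0} < 0$, the constant $\sigma_k$ curvature of the leaves is strictly monotone decreasing in $\e$, so the natural $\sigma_k$-analogue of Proposition \ref{pr:unic}, based on the comparison principle for $\sigma_k$ on $k$-convex hypersurfaces, rules out any competing foliation. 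The main obstacle I anticipate is purely technical bookkeeping: verifying that the restriction operation $x e^\phi = \e$ composed with the fully nonlinear $\sigma_k$ operator preserves the $\calC^{2,\alpha}$ mapping properties and smooth $\e$-dependence required by the implicit function theorem, given only a $\calC^{3,\alpha}$ conformal compactification. This parallels the verification already done for $\calN$ in the CMC case and should require no new ideas.
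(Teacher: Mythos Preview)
Your proposal is correct and follows essentially the same approach as the paper: both reduce to a constant scalar curvature representative, set up the renormalized equation \eqref{eq:skeqts}, identify the linearization at $(\phi_0,\e)=(0,0)$ as $\binom{n-1}{k-1}(\Delta_{h_0}+R_{h_0}/(n-1))$, and apply the implicit function theorem exactly as in Case~1 of \S5. You supply slightly more detail than the paper does in \S6 on the foliation and uniqueness steps (the paper simply defers to the CMC arguments), but your handling of these---ellipticity of the linearized $\sigma_k$ operator on strongly $k$-convex leaves and the $\sigma_k$ comparison principle replacing the mean curvature one in Proposition~\ref{pr:unic}---is the natural adaptation and matches what the paper has in mind.
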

As mentioned in the introduction, the special case of greatest interest is the
\begin{corollary}
Let $M$ be a quasi-Fuchsian $3$-manifold. Then each end of $M$ admits a unique foliation
by constant Gauss curvature surfaces. 
\end{corollary}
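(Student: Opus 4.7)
The plan is to apply the preceding theorem at $k = n = 2$ and then convert the resulting constant $\sigma_2$ condition into constant (intrinsic) Gauss curvature via the Gauss equation. Two hypotheses must be verified: that $g$ is weakly Poincar\'e-Einstein, and that the conformal infinity has negative (generalized boundary) Yamabe invariant in the sense of Section~1.

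For the first hypothesis, a quasi-Fuchsian $3$-manifold is by definition a geometrically finite deformation of the warped product $(\Sigma \times \RR, dt^2 + \cosh^2 t\, h)$, so it carries a complete hyperbolic metric with $\Ric(g) + 2g = 0$; this is Poincar\'e-Einstein, and in particular weakly Poincar\'e-Einstein in the sense of \S 2, so the Graham-Lee expansion at either end satisfies $h_1 \equiv 0$ and $h_2 = -P_{h_0}$. In dimension $n = 2$ this yields $\kappa_1 \equiv 0$ and $\kappa_2 = -\frac{1}{2(n-1)}R_{h_0} = -\frac{1}{2}R_{h_0}$. For the second hypothesis, the conformal infinity of $M$ is the disjoint union of two closed surfaces, each diffeomorphic to the underlying Riemann surface $\Sigma$, which has genus $\geq 2$; hence $\chi(\partial M) < 0$, and Gauss--Bonnet gives
\begin{equation*}
- \int_{\partial M} \kappa_2\, dV_{h_0} \;=\; \tfrac12 \int_{\partial M} R_{h_0}\, dV_{h_0} \;=\; 2\pi\, \chi(\partial M) \;<\; 0,
\end{equation*}
which is precisely the paper's condition of negative generalized boundary Yamabe invariant in dimension $n = 2$.

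Applying the preceding theorem with $k = n = 2$ now produces a unique foliation of each end of $M$ by surfaces of constant $\sigma_2$ curvature. Since $M$ has constant sectional curvature $-1$, the Gauss equation for a surface $\Sigma' \subset M$ reads $K_{\Sigma'} = \sigma_2(\II) - 1$, so constant $\sigma_2$ is pointwise equivalent to constant intrinsic Gauss curvature, and both the existence and uniqueness conclusions transfer directly to the desired statement. The main technical point I would want to scrutinize is the $\sigma_2$-analogue of Proposition~\ref{pr:unic} underlying the uniqueness clause of the preceding theorem: it requires a comparison principle for the fully nonlinear $\sigma_2$ operator in place of the CMC comparison principle. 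This is available in the present setting because on leaves sufficiently close to infinity the principal curvatures cluster near $+1$, well inside the G\aa rding cone where $\sigma_2$ is strictly elliptic, so the standard tangency argument used in Proposition~\ref{pr:unic} goes through essentially unchanged.
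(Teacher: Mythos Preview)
Your proof is correct and follows the same route the paper implicitly intends: verify that a quasi-Fuchsian $3$-manifold is (weakly) Poincar\'e--Einstein with negative boundary Yamabe invariant, apply the preceding theorem at $k=n=2$, and then use the Gauss equation $K_{\Sigma'}=\sigma_2(\II)-1$ in constant curvature $-1$ to identify constant $\sigma_2$ with constant intrinsic Gauss curvature. The paper offers no separate argument for the corollary, so you have in fact supplied more than it does---in particular your Gauss--Bonnet verification of the sign condition and your remark that the $\sigma_2$ comparison principle needed for uniqueness holds because the leaves have principal curvatures near $+1$, hence lie in the elliptic (G\aa rding) region, are both points the paper leaves tacit.
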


\end{document}